\begin{document}
\newtheorem{lem}{Lemma}[section]
\newtheorem{prop}{Proposition}[section]
\newtheorem{cor}{Corollary}[section]
\numberwithin{equation}{section}
\newtheorem{thm}{Theorem}[section]
\theoremstyle{remark}
\newtheorem{example}{Example}[section]
\newtheorem*{ack}{Acknowledgment}
\theoremstyle{definition}
\newtheorem{definition}{Definition}%[section]
\theoremstyle{remark}
\newtheorem*{notation}{Notation}
\theoremstyle{remark}
\newtheorem{remark}{Remark}[section]
\newenvironment{Abstract} 
{\begin{center}\textbf{\footnotesize{Abstract}}%
\end{center} \begin{quote}\begin{footnotesize}}
{\end{footnotesize}\end{quote}\bigskip}
\newenvironment{nome}
{\begin{center}\textbf{{}}%
\end{center} \begin{quote}\end{quote}\bigskip}

\newcommand{\norm}[2]{{\left\| #1 \right\|}_{#2}}
\newcommand{\round}[1]{{\left ( #1 \right )}}
\newcommand{\abs}[1]{{\left | #1 \right |}}
\newcommand{\triple}[1]{{|\!|\!|#1|\!|\!|}}
\newcommand{\xx}{\langle x\rangle}
\newcommand{\ep}{\varepsilon}
\newcommand{\al}{\mu}
\newcommand{\be}{\beta}
\newcommand{\de}{\partial}
\newcommand{\la}{\lambda}
\newcommand{\La}{\Lambda}
\newcommand{\ga}{\gamma}
\newcommand{\del}{\delta}
\newcommand{\Del}{\Delta}
\newcommand{\sig}{\sigma}
\newcommand{\ome}{\omega}
\newcommand{\Ome}{\Omega}
\newcommand{\C}{{\mathbf C}}
\newcommand{\N}{{\mathbb N}}
\newcommand{\Z}{{\mathbf Z}}
\newcommand{\So}{{\mathbf S}}
\newcommand{\R}{{\mathbf R}}
\newcommand{\T}{{\mathbf T}}
\newcommand{\Rn}{{\mathbf R}^{n}}
\newcommand{\Rnu}{{\mathbb R}^{n+1}_{+}}
\newcommand{\Cn}{{\mathbb C}^{n}}
\newcommand{\spt}{\,\mathrm{supp}\,}
\newcommand{\M}{{\mathcal M}}
\newcommand{\Lin}{\mathcal{L}}
\newcommand{\SSS}{\mathcal{S}}
\newcommand{\F}{\mathcal{F}}
\newcommand{\xxi}{\langle\xi\rangle}
\newcommand{\eei}{\langle\eta\rangle}
\newcommand{\xei}{\langle\xi-\eta\rangle}
\newcommand{\yy}{\langle y\rangle}
\newcommand{\dint}{\int\!\!\int}
\newcommand{\hatp}{\widehat\psi}
\renewcommand{\Re}{\;\mathrm{Re}\;}
\renewcommand{\Im}{\;\mathrm{Im}\;}

\title[ NLS on $\R^n\times \M^k$]%
{{Well-posedness and scattering for the mass-energy NLS on $\R^n\times \M^k$}}
\author{}

\author{Mirko Tarulli}

\address{Mirko Tarulli\\
Dipartimento di Matematica, University of Pisa\\
56126 Pisa, Italy}

\email{tarulli@mail.dm.unipi.it}

\begin{abstract}
We study the nonlinear Schr\"odinger equation posed on product spaces $\R^n\times \M^k$, for $n\geq 1$ and  $k\geq1$, with $\M^k$ any $k$-dimensional compact Riemaniann manifold. 
The main results concern global well-posedness and scattering for small data solutions in non-isotropic Sobolev fractional spaces. In the particular case of $k=2$, $H^1$-scattering is also obtained. 
\end{abstract}

\maketitle

\date{}
\section{Introduction}
The main object of the paper is the study of nonlinear Schr\"odinger equation (NLS) on product spaces $\R^n\times \M^k$,
with $n\geq1$ and $\M^k$ is a compact Riemaniann $k$-manifold with  $k\geq1$. More precisely we consider the following family of Cauchy problems:
\begin{equation}\label{eq.lwanon}
\begin{cases}
i\partial_t u + \Delta_{x,y} u +\lambda| u|^\mu u=0, \ \ \ \ (t,x,y) \in \R\times\R^n\times \M^k\\
u(0,x,y)=f(x,y)\in \mathscr H^{0,\sigma}_{x,y},
\end{cases}
\end{equation}
for $\sigma>\frac k2$, where
$$
\Delta_{x,y}=\sum_{l=1}^n \partial_{x_l}^2+ \Del_{y}, 
$$ 
with $\Del_{y}$ the Laplace-Beltrami operator associated to the manifold $\M^k,$ defined in local coordinates by
$$
\frac 1{\sqrt{|g(y)|}}\partial_{y_i} \sqrt{|g(y)|}g^{hi}(y)\partial_{y_i},
$$
where $g_{hi}(y)$ is the metric tensor, $|g(y)|=\det (g_{hi}(y))$ and $g^{hi}=\left (g_{hi}(y) \right)^{-1}.$
Moreover $\lambda$ is any real number, the nonlinearity parameter $\mu$ satisfies the assumption
\begin{equation}\label{eq.al}
\frac 4n\leq\mu<\mu^*(n), \ \ \ \  \mu^*(n)=
\begin{cases}
\frac 4{n-1}  \ \ \ \  \  \  \text{if} \ \ \ n\geq2,\\
+\infty \ \ \ \ \ \,  \text{if} \ \ \ n=1,
\end{cases}
\end{equation}
and for any $s\in \R$, we denote
\begin{equation}\label{eq.AnSp}
\mathscr H^{s,\sigma}_{x,y} =
%L^2_{x}H^{\sigma}_{y}=
\round{1-\Delta_x}^{-\frac s2} \round{1-\Del_{y}}^{-\frac \sigma2}L^2_{x,y},
\end{equation}
where  $L_{x,y}^2=L^2(\Rn\times \M^k)$ (see \cite{Tri}).
Here we indicate $f\in L^2(\Rn \times \M^k)$ if the
\begin{align}\label{cons1}
\norm{f}{L^2(\Rn \times \M^k)}=\int_{\M^k}\int_{\Rn}|f|^2\, dxd\mathrm{v}_{g }<+\infty,
\end{align}
with $d \mathrm{v}_{ g}$ the volume element of $\M^k$ which reads in local coordinates as
$ \sqrt{|g(y)|}dy$. Furthermore the $h$-th component of the gradient operator $\nabla_y$ is given in local coordinates by $g^{hi}(y)\partial_{y_i}$.
Take into account that the power nonlinearity $\mu^*(n)$ corresponds to the  $H^1$-critical NLS in $\R^{n+1}$
as well as to the $H^{1/2}$-critical nonlinearity in $\R^n$.\\
Nonlinear equations of type \eqref{eq.lwanon} are deeply studied because their relevance from an applied science point of view. In fact, they describes the so called wave-guides important in the optics and communications theory (\cite{DanRa}, \cite{Sc} and \cite{SL}), as well as in quantum mechanics (see \cite{EK} and references therein), just to name a few. Consequently 
the analysis of the NLS on partially compact geometries has been the topic of many papers. In the particular case  of $\M^k=\T^k$, here $\T^k$ indicates the $k$-dimensional standard flat torus, we mention  \cite{TaTz} where it is studied the cubic NLS posed on $\R\times \T$ with initial data in $L^2$, \cite{IP} which handle with the energy-critical defocusing NLS on $\R\times \T^3$ and \cite{HTT} concerning global well posedness for solutions of \eqref{eq.lwanon} on $\R^n\times\T^{4-n}$ with cubic nonlinearity. We remand to \cite{HP}, \cite{TV2} in which the authors shed light also on the asymptotic behavior in the energy space for large data solutions of the defocusing NLS on $\R\times \T^2$ with quintic nonlinearity and on $\R^n\times \T^1, \, n\geq1$ with nonlinearity as in \eqref{eq.al} respectively. We mention also the remarkable paper \cite{HPTV} where modified scattering is obtained when cubic defocusing NLS is given on $\R\times \T^k, \, k=1,\dots,4$. Conversely for general product manifolds $\R^n\times \M^k,$ with $\M^k$ any compact manifold we quote \cite{TTV} where global well-posedness in $H^1_{x,y}(\R^n\times \M^1)$ is achieved for focusing NLS with nonlinearity such that $\mu<\frac4{n+1}$ and \cite{TV} where it is proved global well-posedness and scattering for the cubic NLS with data small with respect suitable non-isotropic Sobolev norms (actually larger than the classical energy $H^1$-norm when $k> 2$). 
In the above literature it arises, with some few exceptions, not only that to earn informations on well-posedness and scattering for the NLS given on product manifolds it is required an appropriate geometry for the compact manifolds, but also that  the asymptotic behavior of the solutions to \eqref{eq.lwanon} is poorly understood when the nonlinearity parameter $\mu$ such that $\frac 4n\leq \mu\leq \frac 4{n+k-2}$ is a fractional number.
\\
\\
Motivated by this, our main contribution in this paper is the well-posedness and scattering analysis of \eqref{eq.lwanon} with fractional pure power nonlinearities which satisfy \eqref{eq.al}, emphasizing that the only assumption we impose to the manifolds $\M^k$ is the compactness. We point out that the approach we used to study the local Cauchy problem associated with \eqref{eq.lwanon}  is divided in two different parts: we look first at $\mu=\frac 4n$ and then to $\mu<\frac 4{n-1}$. As far as concern the former case, that is
$L^2$-critical in $\R^n$, here we follows the spirit 
of the papers \cite{TV} (and also of \cite{TTV}).
Our key ingredient 
is a suitable version of Strichartz estimates of the type
$$\|e^{it\Delta_{x,y}} f\|_{L^q_t L^r_xL^2_y}\leq C \|f\|_{L^2_{x,y}},$$ where $(q, r)$ are such that
\begin{equation}\label{striKeelTao}
\frac 2q + \frac nr=\frac n2, \ \ q\geq 2, \ \ (n, q)\neq (2,2),
\end{equation}
(see \cite{KT} for more details), $L^2_y$ and $L^r_x$ are respectively the spaces $L^2(\M^k)$ and  $L^r(\R^n)$ and in 
general for any Banach space $X$ we define
$$\| f \|_{L^q_tX} =\left(
\int_{\R} \norm{f(x)}{X}^q dt\right)^{1/q},$$  (for its version local in time we adopt the symbol $L^q_{(t_1, t_2)} X$, with preassigned $t_1, t_2\in\R$).
Observe that the above estimates roughly speaking are a mixture of the (non dispersive) $L^2$-conservation w.r.t. the compact $y$ variable and
the classical Strichartz estimates w.r.t. the dispersive directions $\R^n$. Of course along with the above estimates one can also consider
similar ones  
for the Duhamel operator. Armed with those inequalities (and their version with derivatives) one can perform
a fixed point argument is the spaces $L^q_tL^r_xH^\sigma_y$,
where we denote, from now on, $H^{\sigma}_{y}=W^{\sigma, 2}_y$ with
\begin{equation}\label{sobalg}
L^r_xW^{\sigma, l}_{y}=\round{1-\Delta_y}^{-\frac \sigma2} L^r_x L^l_{y},
\end{equation}
 for $\sigma\in \R$ and where $L^l_y$ is the spaces $L^l(\M^k)$ for $l\geq1$.
The main advantage here is that one can consider the $\C$-valued solution $u(t,x,y)$ 
as functions dependent on the $(t, x)$ variables and valued in the algebra $H^\sigma_y$. Hence we have
in the product manifolds setting the same numerology involved in the study of NLS posed on $\R^n$
via admissible Strichartz norms $L^q_tL^r_x$, with $(q,r)$ as in \eqref{striKeelTao}, which enables also to transfer to $\R^n\times \M^k$ the scattering techniques available in $\R^n$.
On the other hand we notice that in the classical Euclidean theory the best nonlinearity that can be reached
with this technique is the $L^2$-critical in $\R^n$, i.e. $0<\mu\leq 4/n$. Namely the main difficulty in the transposition of the above analysis to nonlinearities which are $L^2$- supercritical and $H^{\frac 12}$-subcritical in $\R^n$, i.e. $4/n< \mu<\mu^*(n)$, is that in analogy with the analysis of $L^2$-supercritical NLS in $\R^n$ it seems to be necessary to work with Strichartz estimates involving derivatives
w.r.t. $x$ variable.\\
To overcome this obstacle we make an use, similarly to \cite{TV2}, of a class of inhomogeneous Strichartz estimates 
with respect the $x$ variable true in a range of Lebesgue exponents larger 
than the one given in the usual homogeneous estimates context (see \cite{Fo} and \cite{Vi}).
Observe also that following our discussion above we are interested to work with a space which is an algebra w.r.t. to $y$ variable, i.e.
$H^\sigma_y$ with $\sigma>k/2$, on the other hand
we need to consider at most $1/2$ derivatives w.r.t to $x$ variable since our nonlinearity
 is $H^{\frac 12}$ subcritical in $\R^n$, that is $\mu<\frac 4{n-1}$. Then the same argument used to deal with the $L^2$-critical case infers to well-posedness. In particular it also permits, by treating initial small data in $\mathscr H^{s,\sigma}_{x,y}$, to recover scattering in non-isotropic spaces $\mathscr H^{0,\sigma}_{x,y}$.
\\
\\
%is essentially based on a suitable version of Strichartz estimates. We deal with some questions as:
%sociated to \eqref{eq.lwanon}:
%\begin{itemize}
%\item
%Local and global existence for the map data-solution $f\rightarrow u(t,\cdot),$ assuming the initial data in the energy space $H^1(\Rn\times \T).$  
%\item
%Persistence of the $H^1$ regularity of solutions to \eqref{eq.lwanon}.
%\end{itemize}
Next we enter in the heart of matter with the main contributions of this paper. The first results deals with the mass NLS.
\begin{thm}\label{halflwpWeak}
Assume $\mu=\frac 4n$. Then for every $n\geq 1$, $k\geq2$ and $\frac k2<\sigma<\frac 4n+1$ there exists a positive number $\varepsilon=\varepsilon(\sigma)$ such that the problem \eqref{eq.lwanon} enjoys a unique
global solution 
\begin{align}\label{lwp2a}
u(t,x,y)\in  L^\ell(\R; L^{\ell}_x(\Rn;  H_y^{\sigma}(\M^k)),
\end{align}
where $\ell=\frac{2n+4}{n}$, for initial data $f \in \mathscr H^{0,\sigma}_{x,y}$ such that $\|f\|_{\mathscr H^{0,\sigma}_{x,y}}<\varepsilon$. In addition $u(t,x,y)\in L^\infty( \R; \mathscr H^{0,\sigma}_{x,y})$ and there exist $\varphi_{\pm}  \in \mathscr H^{0,\sigma}_{x,y}$
so that
\begin{equation}\label{HSscatt}
\lim_{t\rightarrow \pm \infty} \|u(t,x,y) - e^{-it\Delta_{x,y}} \varphi_{\pm} \|_{\mathscr H^{0,\sigma}_{x,y}}=0.
\end{equation}
\end{thm}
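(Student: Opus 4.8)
The plan is to run a contraction-mapping scheme in the resolution space
\[
X=L^\ell\round{\R;L^\ell_x(\Rn;H^\sigma_y(\M^k))},\qquad \ell=\frac{2n+4}{n},
\]
regarding the unknown as an $(t,x)$-function valued in the algebra $H^\sigma_y$. First I would record that the diagonal pair $(\ell,\ell)$ is admissible in the sense of \eqref{striKeelTao}, since $\frac 2\ell+\frac n\ell=\frac n2$ and $(n,\ell)\neq(2,2)$. With the Duhamel operator
\[
\Phi(u)(t)=e^{it\Delta_{x,y}}f+i\lambda\int_0^t e^{i(t-s)\Delta_{x,y}}\round{|u|^\mu u}(s)\,ds,
\]
the free term is controlled by the vector-valued Strichartz estimate: since $\round{1-\Del_y}^{\sigma/2}$ commutes with $e^{it\Delta_{x,y}}$ and $e^{it\Del_y}$ is an isometry of $H^\sigma_y$, applying the scalar estimate to $\round{1-\Del_y}^{\sigma/2}f$ gives $\norm{e^{it\Delta_{x,y}}f}{X}\lesssim\norm{f}{\mathscr H^{0,\sigma}_{x,y}}$.

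The heart of the matter is the nonlinear estimate, carried out fibrewise in $y$. The lower bound $\sigma>\frac k2$ makes $H^\sigma_y$ a Banach algebra continuously embedded in $L^\infty_y$, while the upper bound $\sigma<\mu+1=\frac4n+1$ matches the H\"older regularity of the power map $z\mapsto|z|^\mu z$; together they yield, via a Moser/fractional-composition estimate on the compact manifold,
\[
\norm{|u|^\mu u-|v|^\mu v}{H^\sigma_y}\lesssim\round{\norm{u}{H^\sigma_y}^\mu+\norm{v}{H^\sigma_y}^\mu}\norm{u-v}{H^\sigma_y}.
\]
Inserting this into the retarded Strichartz estimate with the pair $(\ell,\ell)$ on the left and its dual $(\ell',\ell')$ on the right, then distributing the $\mu+1$ factors by H\"older in $(t,x)$ --- the balance $\frac1{\ell'}=\frac{\mu+1}{\ell}$ being exactly the mass-critical scaling --- I obtain
\[
\norm{\Phi(u)-\Phi(v)}{X}\lesssim\round{\norm{u}{X}^\mu+\norm{v}{X}^\mu}\norm{u-v}{X}.
\]
Hence, choosing $\varepsilon=\varepsilon(\sigma)$ small enough that the constants in the composition estimate are absorbed, $\Phi$ maps a small ball of $X$ into itself and contracts whenever $\norm{f}{\mathscr H^{0,\sigma}_{x,y}}<\varepsilon$, producing the unique global solution \eqref{lwp2a}.

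To upgrade to the persistence and scattering claims I would reinsert the fixed point into the Duhamel formula and estimate with the energy endpoint $(q,r)=(\infty,2)$. Since $e^{it\Delta_{x,y}}$ is an isometry of $\mathscr H^{0,\sigma}_{x,y}$ and the retarded estimate bounds the inhomogeneous term by $\norm{|u|^\mu u}{L^{\ell'}_tL^{\ell'}_xH^\sigma_y}\lesssim\norm{u}{X}^{\mu+1}<+\infty$, one gets $u\in L^\infty(\R;\mathscr H^{0,\sigma}_{x,y})$. Setting $\varphi_\pm=f+i\lambda\int_0^{\pm\infty}e^{-is\Delta_{x,y}}\round{|u|^\mu u}(s)\,ds$, convergent in $\mathscr H^{0,\sigma}_{x,y}$, the same tail estimate gives
\[
\norm{u(t)-e^{it\Delta_{x,y}}\varphi_\pm}{\mathscr H^{0,\sigma}_{x,y}}\lesssim\norm{|u|^\mu u}{L^{\ell'}((t,\pm\infty);L^{\ell'}_xH^\sigma_y)},
\]
which vanishes as $t\to\pm\infty$ by dominated convergence, the global finiteness of the right-hand side over $\R$ being $\norm{u}{X}^{\mu+1}$; this establishes the scattering \eqref{HSscatt}.

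The step I expect to be the main obstacle is the fibrewise Lipschitz estimate for $|u|^\mu u$ in $H^\sigma_y$. Because $\mu=\frac4n$ need not be an even integer, $z\mapsto|z|^\mu z$ is only $C^{1+\mu}$, so controlling its $H^\sigma_y$-differences for fractional $\sigma$ requires a genuine fractional chain/product rule on $\M^k$; reconciling the resulting smoothness ceiling $\sigma<1+\mu$ with the algebra threshold $\sigma>\frac k2$ is precisely what determines the admissible window for $\sigma$ and the dependence $\varepsilon=\varepsilon(\sigma)$. Everything else is the standard mass-critical fixed-point numerology transferred from $\Rn$ to $\Rn\times\M^k$ through the commutation of $\round{1-\Del_y}^{\sigma/2}$ with the flow.
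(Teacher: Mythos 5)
Your overall architecture (vector-valued Strichartz with the diagonal mass-critical pair $(\ell,\ell)$, the H\"older balance $\frac{1}{\ell'}=\frac{\mu+1}{\ell}$, the $L^\infty_t$ endpoint for persistence, and the Cauchy-tail argument for scattering) matches the paper. But there is a genuine gap exactly where you flag "the main obstacle": your contraction relies on the difference estimate
\[
\norm{|u|^\mu u-|v|^\mu v}{H^\sigma_y}\lesssim\round{\norm{u}{H^\sigma_y}^\mu+\norm{v}{H^\sigma_y}^\mu}\norm{u-v}{H^\sigma_y},
\]
and you never prove it. For fractional $\mu=\frac 4n$ (so $\mu<1$ when $n\geq 5$) the map $z\mapsto |z|^\mu z$ is only $C^{1,\mu}$, and a Lipschitz bound for it in $H^\sigma_y$ with $\sigma$ ranging up to $1+\mu$ is not available from the fractional chain/composition machinery: what one actually gets (and what the paper's Appendix, Proposition on the fractional inequality, provides) is only the Moser-type bound $\norm{f|f|^\mu}{H^\sigma_y}\lesssim\norm{f}{H^\sigma_y}\norm{f}{L^\infty_y}^\mu$ for a single function, not a difference estimate. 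Asserting the Lipschitz version as a consequence of "a Moser/fractional-composition estimate" is the missing step, and it is precisely the step that fails in general in this range of $\sigma$.

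The paper avoids this by running the fixed point on the ball $B_{L^{\overline\ell}_t L^{\overline p}_x H^\sigma_y}(0,R)$ but \emph{metrized by the weaker norm} $\norm{\cdot}{L^{\overline\ell}_t L^{\overline p}_x L^2_y}$: the self-mapping property uses only the single-function Moser bound, while the contraction uses the elementary pointwise inequality $\bigl| v_1|v_1|^\mu-v_2|v_2|^\mu\bigr|\lesssim |v_1-v_2|\round{|v_1|^\mu+|v_2|^\mu}$ measured in $L^2_y$, with the $L^\infty_y$ factors absorbed through $H^\sigma_y\subset L^\infty_y$. This weak-norm contraction on a strong-norm ball (a complete metric space) is the standard device for non-smooth nonlinearities, and it is what you need to substitute for your $H^\sigma_y$-Lipschitz claim. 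The remainder of your argument — persistence in $L^\infty(\R;\mathscr H^{0,\sigma}_{x,y})$ and the construction of $\varphi_\pm$ from the Duhamel tail, which only require the single-function nonlinear bound — then goes through as you wrote it.
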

\begin{remark}\label{H1}
In the above Theorem \ref{halflwpWeak} are presented peculiar properties of the solutions to \eqref{eq.lwanon}.
Specifically if we select in the Cauchy problem \eqref{eq.lwanon}, for  suitable $\epsilon>0$, initial data $f\in\mathscr H^{0,\frac k2+\epsilon}_{x,y}$ with size $\varepsilon$, one gets scattering 
in non-isotropic Sobolev spaces characterized as in \eqref{eq.AnSp} of which regularity is strictly connected with the spatial dimensions of the manifold $\M^k$ and the Euclidean part $\R^n$. Such an asymptotic behavior of the solutions $u(t,x,y)$ is completely independent from the geometry of $\M^k$,
which is required only to be a compact Riemannian manifold. Moreover, in the case $k=2$, we observe that if we take a sufficiently small $\epsilon>0$, then the space $\mathscr H^{0,1+\epsilon}_{x,y}$ is slightly stronger than $H^1_{x,y}$ only w.r.t. the $y$-variable. This fact is totally consistent with the results obtained in \cite{HP} and \cite{HTT}, for NLS with algebrical nonlinearities defined on  $\R\times\T^2$ and $\R^2\times\T^2$ respectively and where initial data in $H^1_{x,y}$ are considered. In addition we underline also that while scattering in energy space can not hold on $\R\times\T^3$ (see for \cite{IP} for example), we get on the same geometric setting the asymptotic completeness for $u(t,x,y)$ in non-isotropic spaces $\mathscr H^{0,\sigma}_{x,y}$.
\end{remark}

%It is implicit in the statement above that  
%the couples belonging to $Q_{n,\mu}$ are Schr\"odinger - acceptable.

Then we can state the second result in the special case $k=2$, that is the mass-energy NLS. \begin{thm}\label{halflwpStrong}
Assume $\mu=\frac 4n$. Then for every $n\geq 1$ and $1<\sigma<1+\frac 4n$ there exists 
a positive number $\varepsilon=\varepsilon(\sigma)$ such that the problem \eqref{eq.lwanon} enjoys a unique
global solution 
\begin{align}\label{lwp2b}
u(t,x,y)\in L^\infty( \R;H^{1}_{x,y})\cap L^\ell( \R; L^{\ell}_x(\Rn;  H_y^{\sigma}(\M^2)),
\end{align}
where $\ell=\frac{2n+4}{n}$, in the following cases:
\begin{enumerate}
\item \label{defoc} if $\lambda<0$ (i.e. \eqref{eq.lwanon} is defocusing),  for initial data $f \in H_{x,y}^1\cap \mathscr H^{0,\sigma}_{x,y}$ such that $\|f\|_{\mathscr H^{0,\sigma}_{x,y}}<\varepsilon$;
\item \label{focus} if  $\lambda>0$, for initial data $f \in H_{x,y}^1\cap \mathscr H^{0,\sigma}_{x,y}$ such that $\|f\|_{H^1_{x,y}\cap\mathscr H^{0,\sigma}_{x,y}}<\varepsilon$ (i.e. 
\eqref{eq.lwanon} is focusing and the initial data are small). 
\end{enumerate}
In addition there exist $\varphi_{\pm}  \in H_{x,y}^1$
so that
\begin{equation}\label{H1scatt}
\lim_{t\rightarrow \pm \infty} \|u(t,x,y) - e^{-it\Delta_{x,y}} \varphi_{\pm} \|_{H^1_{x,y}}=0.
\end{equation}
\end{thm}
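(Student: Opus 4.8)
The plan is to take Theorem \ref{halflwpWeak} as a black box and upgrade its conclusion to the isotropic energy level. Since $k=2$ we have $\sigma>1=k/2$, so $H^\sigma_y$ is a Banach algebra and $H^\sigma_y\hookrightarrow L^\infty_y$; moreover the range $1<\sigma<1+\tfrac4n$ is precisely the admissible range of Theorem \ref{halflwpWeak} for $k=2$. Applying that theorem (whose construction does not see the sign of $\lambda$) produces, for data small in $\mathscr H^{0,\sigma}_{x,y}$, a global solution with $\|u\|_{L^\ell(\R;L^\ell_xH^\sigma_y)}=:\|u\|_Z\lesssim\varepsilon$ together with scattering in $\mathscr H^{0,\sigma}_{x,y}$. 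Because $\sigma>1$, the latter already controls one $y$-derivative, so the $\nabla_y$-part of the $H^1_{x,y}$-norm and of the $H^1$-scattering comes for free. Hence the whole problem reduces to propagating one $x$-derivative: to bound $\nabla_x u$ in $L^\infty_tL^2_{x,y}$ and in the scattering norm $L^\ell_tL^\ell_xL^2_y=:\widetilde Z$.

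First I would differentiate Duhamel's formula in $x$. Since $\nabla_x$ commutes with $e^{-it\Delta_{x,y}}$, the function $\nabla_x u$ solves the same linear problem with forcing $\nabla_x(|u|^{4/n}u)$, so the Strichartz estimate $\|e^{-it\Delta_{x,y}}g\|_{L^\ell_tL^\ell_xL^2_y}\lesssim\|g\|_{L^2_{x,y}}$ (note $(\ell,\ell)$ satisfies \eqref{striKeelTao}, namely $\tfrac2\ell+\tfrac n\ell=\tfrac n2$) and its inhomogeneous companion give $\|\nabla_x u\|_{L^\infty_tL^2_{x,y}\cap\widetilde Z}\lesssim\|\nabla_x f\|_{L^2_{x,y}}+\|\nabla_x(|u|^{4/n}u)\|_{L^{\ell'}_tL^{\ell'}_xL^2_y}$, with $\ell'=\ell/(\ell-1)$. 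The decisive nonlinear estimate is $\|\nabla_x(|u|^{4/n}u)\|_{L^{\ell'}_tL^{\ell'}_xL^2_y}\lesssim\|u\|_Z^{4/n}\|\nabla_x u\|_{\widetilde Z}$. To prove it I would use the ordinary chain rule — one genuine derivative suffices, so no fractional chain rule is needed — to obtain the pointwise bound $|\nabla_x(|u|^{4/n}u)|\lesssim|u|^{4/n}|\nabla_x u|$; then Hölder in $y$ with $H^\sigma_y\hookrightarrow L^\infty_y$ gives $\||u|^{4/n}\nabla_x u\|_{L^2_y}\lesssim\|u\|_{H^\sigma_y}^{4/n}\|\nabla_x u\|_{L^2_y}$; finally Hölder in $x$ (exponents $\tfrac{n+2}2$ and $\ell$) and in $t$ closes exactly at the mass-critical scaling $\mu=4/n$, since $(\tfrac4n+1)\ell'=\ell$. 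Feeding this back and using that $\|u\|_Z\lesssim\varepsilon$ is small lets me absorb $\|\nabla_x u\|_{\widetilde Z}$ into the left-hand side and conclude $\|\nabla_x u\|_{L^\infty_tL^2_{x,y}\cap\widetilde Z}\lesssim\|\nabla_x f\|_{L^2_{x,y}}\lesssim\|f\|_{H^1_{x,y}}$; combined with mass conservation and the $y$-regularity above this yields $u\in L^\infty(\R;H^1_{x,y})$.

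The two cases enter only through how the initial $H^1$-size is controlled. In the defocusing case $\lambda<0$ the energy $E(u)=\tfrac12\|\nabla_{x,y}u\|_{L^2_{x,y}}^2+\tfrac{|\lambda|}{\mu+2}\|u\|_{L^{\mu+2}_{x,y}}^{\mu+2}$ is coercive, so conservation of mass and energy bounds $\|u(t)\|_{H^1_{x,y}}$ uniformly for data of arbitrary $H^1$-size, which is why only $\|f\|_{\mathscr H^{0,\sigma}_{x,y}}<\varepsilon$ is imposed. In the focusing case $\lambda>0$ the energy is not coercive and the conservation laws do not control $H^1$; here I would instead run the $\nabla_x$-bootstrap above as a genuine small-data argument, imposing $\|f\|_{H^1_{x,y}\cap\mathscr H^{0,\sigma}_{x,y}}<\varepsilon$ so that $\|u\|_{L^\infty_tH^1_{x,y}}$ stays small and global. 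In both situations one first closes the estimate on finite intervals, where finiteness of $\|\nabla_x u\|_{\widetilde Z}$ is supplied by the local $H^1$-theory, and then globalizes through the uniform-in-$T$ bound just obtained.

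For the $H^1$-scattering I would set $\varphi_\pm=f+i\lambda\int_0^{\pm\infty}e^{is\Delta_{x,y}}(|u|^{4/n}u)\,ds$ and show $\|u(t)-e^{-it\Delta_{x,y}}\varphi_\pm\|_{H^1_{x,y}}\to0$. The $\nabla_y$- and $L^2$-components of this limit are furnished by the $\mathscr H^{0,\sigma}_{x,y}$-scattering of Theorem \ref{halflwpWeak} (recall $\sigma>1$), while the $\nabla_x$-component follows from the dual Strichartz estimate applied to the Duhamel tail together with the now-established finiteness of $\|\nabla_x u\|_{\widetilde Z}$: indeed $\|\nabla_x(|u|^{4/n}u)\|_{L^{\ell'}((t,\pm\infty);L^{\ell'}_xL^2_y)}\to0$ as $t\to\pm\infty$ because the global spacetime norm is finite. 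The same $\varphi_\pm$ serves both components, whence $\varphi_\pm\in H^1_{x,y}$. The main obstacle is the nonlinear estimate of the second paragraph: it is where three ingredients must fit simultaneously — the chain rule for the non-smooth nonlinearity $|u|^{4/n}u$, the embedding $H^\sigma_y\hookrightarrow L^\infty_y$ that converts the $y$-dependence into a harmless scalar factor $\|u\|_{H^\sigma_y}^{4/n}$, and the exact matching of Hölder exponents which is available only because $\mu=4/n$ is $L^2$-critical in $\R^n$ and $(\ell,\ell)$ is Strichartz-admissible; in the focusing case the extra difficulty is to secure global $H^1$-control without a coercive energy, which is what forces the smallness hypothesis.
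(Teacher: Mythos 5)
Your proposal is correct and follows essentially the same route as the paper: the heart of both arguments is the estimate $\|\nabla(|u|^{4/n}u)\|_{L^{\ell'}_tL^{\ell'}_xL^2_y}\lesssim \|u\|_{L^{\ell}_tL^{\ell}_xH^\sigma_y}^{4/n}\|\nabla u\|_{L^{\ell}_tL^{\ell}_xL^2_y}$ obtained from the pointwise chain rule, the embedding $H^\sigma_y\subset L^\infty_y$, and H\"older at the mass-critical exponents, followed by absorption via the smallness of the $L^\ell_tL^\ell_xH^\sigma_y$ norm inherited from Theorem \ref{halflwpWeak}. The only differences are organizational (you extract the $\nabla_y$ control directly from the $\mathscr H^{0,\sigma}_{x,y}$ theory instead of re-running the estimate for $\nabla_y$ as the paper does with its second auxiliary norm, and you invoke energy conservation in the defocusing case where the paper relies on the same perturbative bound), and these do not change the substance of the proof.
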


\begin{remark}
As we noticed, by pickung up $k=2$, the NLS in \eqref{eq.lwanon} becomes both mass and energy critical. In such a fashion the technicalities developed along the proof of the previous Theorem \ref{halflwpWeak} can be improved guaranteeing also well-posedness and scattering  in the energy space (other than $\mathscr H^{0,\sigma}_{x,y}$) if we consider additionally that initial data are $H^1_{x,y}$-bounded.
\end{remark}

At this point we give the following definition borrowed by \cite{Fo} (see also \cite{Vi})
\begin{definition}\label{accept}
We say that the pair $(q, r),$ is Schr\"odinger - acceptable  if
\begin{align}\label{StrFo}
1\leq q<\infty,\ \ 2\leq r\leq \infty, \ \  \frac 1q <n\left (\frac 12-\frac 1{r}\right ), \ \ \text{or} \ \ (q,r)=(\infty,2).
\end{align}
\end{definition}
Finally, for NLS having nonlinearity parameter in the remaining range arising from the condition \eqref{eq.al}, we can state the following:
\begin{thm}\label{halflwpWeakExt}
Assume $\frac 4n<\mu<\mu^*(n)$ and $s=\frac{\mu n -4}{2\mu}$. Then for any $n\geq 1$, $k\geq1$ and $\frac k2<\sigma<\min{\round{\frac {k+1}2-s, 1+\mu}}$,  (at least) a Schr\"odinger - acceptable couple $(q,r)$ and a positive number $\varepsilon=\varepsilon(\sigma)$ such that the problem \eqref{eq.lwanon} has a unique
global solution 
\begin{align}\label{lwp2c}
u(t,x,y)\in L^q( \R; L^{r}_x(\Rn; H_y^{\sigma}(\M^k)),
\end{align}
for initial data $f \in \mathscr H^{s,\sigma}_{x,y}$ and so that $\|f\|_{\mathscr H^{s,\sigma}_{x,y}}<\varepsilon$. In addition $u(t,x,y)\in L^\infty( \R; \mathscr H^{0,\sigma}_{x,y})$ and  there exist $\varphi_{\pm}  \in \mathscr H^{0,\sigma}_{x,y}$
such that
\begin{equation}\label{HSSscatt}
\lim_{t\rightarrow \pm \infty} \|u(t,x,y) - e^{-it\Delta_{x,y}} \varphi_{\pm} \|_{\mathscr H^{0,\sigma}_{x,y}}=0.
\end{equation}
\end{thm}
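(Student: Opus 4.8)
The plan is to solve \eqref{eq.lwanon} by a contraction argument on the Duhamel formulation
\[
u(t)=e^{it\Delta_{x,y}}f+i\lambda\int_0^t e^{i(t-\tau)\Delta_{x,y}}\bigl(|u|^\mu u\bigr)(\tau)\,d\tau,
\]
regarding $u$ as a map of $(t,x)$ valued in the algebra $H^\sigma_y(\M^k)$, which is legitimate precisely because $\sigma>k/2$. Since $s=\frac{\mu n-4}{2\mu}=\frac n2-\frac2\mu$ is exactly the $\dot H^s$-critical exponent of the power $\mu$ in $\Rn$, and $\frac4n<\mu<\mu^*(n)$ forces $0<s<\tfrac12$, the scheme is the $\dot H^s$-critical small-data theory in $\Rn$ (\`a la Cazenave--Weissler) carried out with values in $H^\sigma_y$. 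First I would fix a Schr\"odinger--acceptable pair $(q,r)$ in the sense of Definition \ref{accept}, chosen so that $L^q_tL^r_x$ scales like $\dot H^s_x$, i.e. $\frac2q+\frac nr=\frac n2-s$, and set up the contraction in a small ball of $X=L^q(\R;L^r_x(\Rn;H^\sigma_y))$, carrying along the companion quantity obtained by applying $(1-\Delta_x)^{s/2}$ in the $x$ variable.

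The linear input is the Strichartz estimate lifted to the $y$-fibre: writing $e^{it\Delta_{x,y}}=e^{it\Delta_x}e^{it\Delta_y}$ and using that $e^{it\Delta_y}$ is unitary on every $H^\sigma_y$ and commutes with $(1-\Delta_x)^{s/2}$, the homogeneous bound $\norm{e^{it\Delta_{x,y}}f}{X}\lesssim\norm{f}{\mathscr H^{s,\sigma}_{x,y}}$ reduces to the (scaling-critical) $\dot H^s$-Strichartz inequality on $\Rn$ for the acceptable pair $(q,r)$. For the Duhamel term I would invoke the inhomogeneous Strichartz estimates valid on the enlarged range of acceptable pairs from \cite{Fo} and \cite{Vi}; this is exactly the device, already used in \cite{TV2}, that lets one reach the $L^2$-supercritical powers $\mu>4/n$ lying outside the Keel--Tao admissible range \eqref{striKeelTao}. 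Again these are applied fibrewise in $y$, using only the $L^2(\M^k)$-conservation of $e^{it\Delta_y}$ together with the algebra structure of $H^\sigma_y$, so that no dispersive information on $\M^k$ is needed.

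The nonlinear estimate is the crux. Using the algebra property $\norm{vw}{H^\sigma_y}\lesssim\norm{v}{H^\sigma_y}\norm{w}{H^\sigma_y}$ (valid for $\sigma>k/2$) fibrewise, a fractional Leibniz/chain rule in $x$ to distribute the $s<\tfrac12$ derivatives across the non-smooth map $z\mapsto|z|^\mu z$, and H\"older in $t$, one bounds the relevant dual norm of $|u|^\mu u$ by $\norm{u}{X}^{\mu+1}$. The two upper thresholds $\sigma<\frac{k+1}2-s$ and $\sigma<1+\mu$ in the hypothesis are precisely the conditions under which this combined estimate closes: the second controls the limited smoothness of $|z|^\mu z$ when $\mu$ is not an even integer, and the first governs the compatibility of the fractional-derivative distribution in $x$ with the Sobolev embedding in the $y$-fibre. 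Smallness of $\norm{f}{\mathscr H^{s,\sigma}_{x,y}}$ then makes the Duhamel map a contraction on a small ball of $X$, yielding a unique global solution with $\norm{u}{X}<\infty$.

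Finally, the uniform-in-time membership $u\in L^\infty(\R;\mathscr H^{0,\sigma}_{x,y})$ follows by re-running the linear and inhomogeneous estimates at the endpoint $(q,r)=(\infty,2)$ with zero $x$-derivatives, so that only the $H^\sigma_y$ regularity and the $L^2_x$-mass are propagated; and the scattering statement \eqref{HSSscatt} is obtained in the standard way. The global finiteness $\norm{u}{X}<\infty$ makes $e^{-it\Delta_{x,y}}u(t)$ Cauchy in $\mathscr H^{0,\sigma}_{x,y}$ as $t\to\pm\infty$, so one defines $\varphi_\pm=f+i\lambda\int_0^{\pm\infty}e^{-i\tau\Delta_{x,y}}(|u|^\mu u)(\tau)\,d\tau$ and checks that the tail of the Duhamel integral vanishes. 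I expect the main obstacle to be the simultaneous choice of the acceptable pair and the closing of the fractional nonlinear estimate: one must exhibit $(q,r)$ meeting Definition \ref{accept} for which both the $\dot H^s$-homogeneous and the inhomogeneous estimates of \cite{Fo,Vi} hold, while the fractional derivative of the non-smooth nonlinearity $|u|^\mu u$ is controlled in the corresponding dual space, and it is the interaction of these requirements with the algebra constraint $\sigma>k/2$ that pins down the admissible window for $\sigma$.
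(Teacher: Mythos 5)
Your overall architecture (contraction in $H^\sigma_y$-valued Strichartz spaces, smallness of the datum, the extended inhomogeneous estimates of Foschi--Vilela, standard Cauchy-sequence scattering) matches the paper, but at the crux of the nonlinear estimate you take a genuinely different and, as written, unjustified route. You propose to carry along $(1-\Delta_x)^{s/2}u$ and to control $(1-\Delta_x)^{s/2}\bigl(|u|^\mu u\bigr)$ by a fractional Leibniz/chain rule in $x$. For the non-smooth map $z\mapsto |z|^\mu z$ with fractional $\mu$ acting on functions valued in the Banach algebra $H^\sigma_y$, no off-the-shelf vector-valued fractional chain rule is available, and this is exactly the difficulty the paper is engineered to avoid. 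In the paper's proof no $x$-derivative ever falls on the nonlinearity: the exponents of Proposition \ref{AlgebricCauchy} are chosen so that the acceptable pairs $(q,r)$ and $(\tilde q,\tilde r)$ satisfy simultaneously the H\"older relation \eqref{eq.ass9bisv}, $\tfrac1{\tilde q'}=\tfrac{\mu+1}{q}$, $\tfrac1{\tilde r'}=\tfrac{\mu+1}{r}$, and the inhomogeneous scaling \eqref{eq.ass102v3}, so that \eqref{eq:202p} closes the Duhamel estimate at zero derivatives in $x$; the $s$ derivatives enter only through the homogeneous estimate on the datum, which is the sole reason $f$ is taken in $\mathscr H^{s,\sigma}_{x,y}$. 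The only constraint actually consumed by the nonlinear estimate is $\sigma<1+\mu$, coming from the Moser-type bound \eqref{eq.1non0} of Proposition \ref{LF}; your reading of $\sigma<\tfrac{k+1}{2}-s$ as a compatibility condition for distributing fractional $x$-derivatives has no counterpart in the paper's argument.

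A second, smaller gap: the paper does not close the fixed point in the single norm $L^q_tL^r_xH^\sigma_y$ but in the sum norm $\mathcal Z^\sigma(q,r)$, which adjoins an admissible component $L^\ell_tL^p_xH^\sigma_y$ with $(\ell,p)$ produced by Lemma \ref{sTrbis} so that $\tfrac1{p'}=\tfrac1p+\tfrac\mu r$ and $\tfrac1{\ell'}=\tfrac1\ell+\tfrac\mu q$. This auxiliary admissible norm is what legitimizes the $L^\infty_t\mathscr H^{0,\sigma}_{x,y}$ bound (via \eqref{eq:200pb3}) and the scattering step (via the admissible dual estimate \eqref{dual} applied to $\|u|u|^\mu\|_{L^{\ell'}L^{p'}_xH^\sigma_y}$); your plan to pair the endpoint $(\infty,2)$ directly against the acceptable dual pair $(\tilde q',\tilde r')$ is not obviously within the Foschi--Vilela range and is not how the paper proceeds. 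If you drop the fractional chain rule in $x$, keep the fibrewise Moser estimate in $H^\sigma_y$ as the only nonlinear input, and add the auxiliary admissible norm, your argument becomes the paper's.
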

\begin{remark} 
Notice that if $k<1+2\mu$, then the above theorem remains valid also if one takes  $f\in \mathscr H^{s,\frac {k+1} 2}_{x,y}$. Furthermore the condition $\frac k2<\sigma< 1+\mu$ forces to restrict to specific integers $n,k\geq 1$ so that $k<\frac {2n+6}{n-1}$.
\end{remark}

We conclude the introduction by recalling other important known works regarding well-posedness about NLS posed on general manifolds $\M^k$.
For the energy-subcritical NLS settled on $\R^{n},$ we quote \cite{CW}, \cite{GV} 
and for the energy-critical NLS the fundamental papers \cite{bourgain} and \cite{Iteam}, which are just some of the examples among the contributions on the topic.
Considering NLS on compact Riemannian manifold, we quote  \cite{Bo} where it is treated the case of $\T^k$ and suitable nonlinearity, \cite{BGT1}
where it studied NLS on a general compact 3-manifold with $H^{\frac12}$-critical nonlinearity. In \cite{HTT1}
and \cite{IP1} it is analyzed the $H^1$-critical NLS on $\T^3$
and in \cite{PTW} the energy-critical NLS in $\So^3$.
We moreover quote \cite{B}, \cite{BCS} (here also scattering is obtained) and \cite{IPS} for NLS on the hyperbolic space.
\\
\\
{\bf Acknowledgments}:{\em The author is supported by the project FIRB 2012 Dispersive Dynamics: Fourier Analysis and Calculus of Variations.}

\section{Strichartz inequalities on $\Rn \times \M^k$}
This section is devoted to present the Strichartz inequalities for the free evolution operator
$e^{it\Del_{x,y}}$ associated to the free Schr\"odinger equation (that is, the equation in \eqref{eq.lwanon} with $\lambda=0$) and for its corresponding Duhamel operator. 
We indicated by $H_x^s$ the spaces $H^{s}(\Rn)$, moreover given any real number $1\leq p \leq \infty$ let be denoted by $1\leq p' \leq \infty$ its H\"older conjugate exponent.
Then one has the following:
\\
\begin{thm}\label{StriProd}
Let be $n\geq1,$ and $\sigma\in\R.$ Then the following homogeneous estimates hold
\begin{align}
\label{eq:200p}
 %\|e^{ i t \Delta_{x,y}} f\|_{L^\infty_t L^1_xH_y^{\frac{1}{2}}} +
 \|e^{ i t \Delta_{x,y}} f\|_{L^q_t L^r_xH_y^{\sigma}}\leq C\|f\|_{ \mathscr H^{s,\sigma}_{x,y}},
 \end{align}
when $(q,r)$ verify the condition
 \begin{align}
\label{eq:201p}  \frac{2}{q}+\frac{n}{r}= \frac{n}{2}-s, 
\end{align} 
with $s< \frac n2$ and satisfy the following
\begin{itemize}
\item  $4\leq q\leq\infty$ and $2\leq r\leq \infty,$ for $n=1;$
\item $2< q\leq\infty$ and $2\leq r\leq \infty,$ for $n=2;$
\item $2\leq q\leq\infty$ and $2\leq r\leq \infty,$ for $n\geq 3.$
\end{itemize}
Moreover the following extended inhomogeneous estimates hold
\begin{align}
 \label{eq:202p}%\left \|\int_0^t e^{ i (t-s) \Delta_{x,y}} F(s,x, y)
%ds\right\|_{L^\infty_t L^2_xH_y^{\frac{1}{2}}}+ \nonumber\\
 \left \|\int_0^t e^{ i (t-\tau) \Delta_{x,y}} F(\tau,x, y)
d\tau\right\|_{L^q_t L^r_xH_y^{\sigma}}\leq C\norm{F}{L^{\tilde{q}'}_t
L^{\tilde{r}'}_xH_y^{\sigma}},
\end{align}
 when the Schr\"odinger-acceptable pairs $(q,r)$ and $(\tilde{q},\tilde{r})$ verify the condition
\begin{align}
\label{eq.203p}
 \frac{1}{q}+\frac{1}{\tilde{q}}=\frac{n}{2}\left (\frac{1}{\tilde{r}'}-\frac{1}{r}\right),
 \end{align}
  with $2\leq q, r\leq\infty$ and  $2\leq \tilde q, \tilde r\leq\infty$ and satisfy the following
\begin{itemize}
\item for $n=1,$ no additional conditions are needed;
\item  for $n=2,$ conditions $r<\infty$ and $\tilde{r}<\infty$ are required;
\item for $n\geq 3,$ the further conditions
\begin{align}
 \label{eq.204p}
 \frac{1}{q}<\frac{1}{\tilde{q}'}, \ \ \ \ \ \frac{n-2}{n}\leq \frac{r}{\tilde{r}}\leq\frac{n}{n-2},
\end{align} 
are needed.
\end{itemize}
\end{thm}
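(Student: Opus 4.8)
The plan is to exploit the tensor structure of the propagator. Since the Euclidean part $\sum_l\partial_{x_l}^2$ and the Laplace--Beltrami operator $\Delta_y$ act on independent variables and commute, the free flow factorizes as $e^{it\Delta_{x,y}}=e^{it\Delta_x}e^{it\Delta_y}$, where $e^{it\Delta_x}$ is the usual Schr\"odinger group on $\Rn$ and $e^{it\Delta_y}$ is the group generated by $\Delta_y$ on $\M^k$. Because $\M^k$ is compact, $-\Delta_y$ is self-adjoint with discrete spectrum and a complete orthonormal system of eigenfunctions in $L^2(\M^k)$; hence, by the spectral theorem, $e^{it\Delta_y}$ is unitary on $L^2_y$ and, commuting with $(1-\Delta_y)^{\sigma/2}$, is unitary on $H^\sigma_y$ as well. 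The first reduction is therefore to pull $(1-\Delta_y)^{\sigma/2}$ through $e^{it\Delta_{x,y}}$ (with which it commutes) on both sides of \eqref{eq:200p} and \eqref{eq:202p}; this removes $\sigma$ from the analysis and leaves us to prove the case $\sigma=0$ with the scalar target $L^2_y$ replaced throughout by the fixed Hilbert space $\mathcal H:=L^2_y$ carried as a coefficient space.

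The core step is to verify that the $\mathcal H$-valued propagator $V(t):=e^{it\Delta_x}e^{it\Delta_y}$ satisfies the two hypotheses of the abstract Strichartz machinery. The energy bound $\|V(t)g\|_{L^2_x\mathcal H}=\|g\|_{L^2_x\mathcal H}$ is immediate, since $e^{it\Delta_x}$ is unitary on $L^2_x$ and $e^{it\Delta_y}$ on $\mathcal H$. For the dispersive bound I would use that the convolution kernel $K_t(x)=(4\pi it)^{-n/2}e^{i|x|^2/4t}$ of $e^{it\Delta_x}$ acts only in $x$, writing
\[
V(t)g(x,\cdot)=\int_{\Rn}K_t(x-x')\,\big(e^{it\Delta_y}g(x',\cdot)\big)\,dx',
\]
and then taking the $\mathcal H$-norm inside the integral; the unitarity of $e^{it\Delta_y}$ on $\mathcal H$ yields the pointwise-in-$x$ estimate $\|V(t)g(x,\cdot)\|_{\mathcal H}\le C|t|^{-n/2}\int_{\Rn}\|g(x',\cdot)\|_{\mathcal H}\,dx'$, that is $\|V(t)g\|_{L^\infty_x\mathcal H}\le C|t|^{-n/2}\|g\|_{L^1_x\mathcal H}$. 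Thus $V(t)$ enjoys exactly the $L^2$-conservation and the untruncated $|t|^{-n/2}$ decay required by the Keel--Tao abstract theorem (with decay exponent $n/2$) and by the extended inhomogeneous frameworks of \cite{Fo} and \cite{Vi}, whose proofs use only these two inputs and are insensitive to the scalar versus Hilbert-valued distinction.

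With these inputs in hand I would conclude as follows. The homogeneous estimate \eqref{eq:200p} for $s=0$ is the abstract Keel--Tao estimate applied to $V(t)$; the admissibility windows stated for $n=1,2,\ge 3$ (in particular the requirement $q\ge4$ when $n=1$ and the exclusion of the endpoint $(q,r)=(2,\infty)$ when $n=2$) are precisely those produced by that theorem with decay rate $n/2$. To recover the loss $s\ge0$ in \eqref{eq:201p}, I would apply the $s=0$ estimate to an admissible pair $(q,\rho)$ with $\tfrac2q+\tfrac n\rho=\tfrac n2$ and then use the $\mathcal H$-valued Sobolev embedding $\dot W^{s,\rho}_x\hookrightarrow L^r_x$, valid because it is a scalar Fourier-multiplier statement in $x$ that transfers verbatim to Hilbert-space-valued functions; the bookkeeping $\tfrac1\rho=\tfrac1r+\tfrac sn$ turns $\tfrac2q+\tfrac n\rho=\tfrac n2$ into exactly \eqref{eq:201p} (the complementary sign of $s$ being handled identically by composing with $(1-\Delta_x)^{s/2}$). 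Finally, the extended inhomogeneous estimate \eqref{eq:202p} follows by invoking the inhomogeneous Strichartz theorems of \cite{Fo} and \cite{Vi} for $V(t)$: their hypotheses are the two bounds verified above, and their conclusion covers the full acceptable range governed by the scaling relation \eqref{eq.203p} together with the side conditions \eqref{eq.204p} for $n\ge3$ and $r,\tilde r<\infty$ for $n=2$.

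I expect the main obstacle to be conceptual rather than computational: the whole theorem rests on the vector-valued dispersive estimate, and the only point that genuinely uses the geometry is the unitarity of $e^{it\Delta_y}$ on $\mathcal H$, i.e.\ the non-dispersive $L^2$-conservation in the compact directions. Once this is isolated, no curvature or spectral-gap information on $\M^k$ is needed, and the remainder is a transcription of the Euclidean abstract theory (crucially, the extended inhomogeneous estimates require the Foschi--Vilela results rather than a mere Christ--Kiselev argument, which would not reach the acceptable, non-admissible range). The delicate bookkeeping---matching the admissibility windows for small $n$ and checking that the acceptable-pair conditions \eqref{eq.204p} are exactly those demanded in \cite{Fo} and \cite{Vi}---is where care, though not new ideas, will be needed.
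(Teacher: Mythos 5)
Your argument is correct, but it follows a genuinely different route from the paper. The paper never factorizes the propagator or invokes a vector-valued abstract theorem: it expands $f$, $F$, $u$ in the $L^2(\M^k)$ eigenbasis of $-\Delta_y$, observes that each mode $u_k$ solves a scalar Schr\"odinger equation on $\Rn$ with the harmless unimodular factor $e^{-it\nu_k}$, applies the scalar Keel--Tao and Foschi/Vilela estimates mode by mode with a $k$-independent constant, sums the squares, and then interchanges $\ell^2_k$ with $L^q_tL^r_x$ (and $L^{\tilde q'}_tL^{\tilde r'}_x$) by Minkowski, which is exactly where the hypothesis $\max(\tilde q',\tilde r')\le 2\le\min(q,r)$ enters; Plancherel in $y$ and commutation with $(1-\Delta_y)^{\sigma/2}$ then finish. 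Your approach instead verifies the energy and $|t|^{-n/2}$ dispersive bounds for the $L^2_y$-valued propagator and asserts that the Keel--Tao and Foschi/Vilela theorems hold verbatim for Hilbert-space-valued $L^r_x$. That assertion is true (duality, complex and real interpolation of $L^p(\mathcal H)$, and the bilinear dyadic decompositions all survive for Hilbert targets), but it is precisely the point your proof leaves as a claim to be checked against the cited proofs, whereas the paper's spectral-decomposition route reduces everything to the scalar statements as black boxes at the mild cost of the Minkowski step. Your version is arguably more conceptual --- it isolates the vector-valued dispersive estimate and makes no use of the eigenfunction expansion --- and your Sobolev-embedding derivation of the derivative-loss case \eqref{eq:201p} is a reasonable substitute for the paper's direct citation of the scalar estimates with $H^s_x$ data; just be aware that for $s<0$ the embedding argument does not apply and that regime should be excluded or treated separately, a point the paper is equally silent on.
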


The proof of Theorem \ref{StriProd} follows by arguing as in the papers \cite{TV} and \cite{TV2}, we give it for the sake of completeness.
\begin{proof}
We introduce the operator

\begin{align*}
u(t,x,y)=e^{it\Delta_{x,y}}f(x,y)+  \int_{0}^{t} e^ {i
(t-\tau) \Delta_{x,y}} F(\tau,x,y) d\tau&,
\end{align*}
and observe that
\begin{align}\label{eq.cauchy}
i\partial_t u + \Delta_x u +\Delta_y u= F, \ \ \ \ (t,x,y)\in \R\times \R^n \times \mathcal M^k,
\end{align}
with $u(0,x,y)=f(x,y).$
We have the following decomposition with respect to the orthonormal basis $\{ \phi(y)\}$ of $L^2(\M^k)$ coming from the eigenfunctions of $-\Delta_y,$ that is $-\Delta_y \phi_k(y)=\nu_k \phi_k(y),$
\begin{align}
\label{dec1} u(t,x,y)=\sum_k u_k(t,x) \phi_k(y),&\\
\label{dec2} F(t,x,y)=\sum_k F_k(t,x) \phi_k(y),&\\
f(x,y)=\sum_k f_k(x) \phi_k(y).&\nonumber
\end{align}
Notice that the functions $u_k(t,x),$ $F_k(t,x)$ and $f_k(x)$ satisfy the following Cauchy problem
\begin{align}\label{eigen}
i\partial_t u_k + \Delta_x u_k  -\nu_k^2 u_k= F_k, \ \ \ \ (t,x)\in \R\times \R_x^n,
\end{align}
with $$u_k(0,x)=f_k(x).$$
Applying the classical homogeneous Strichartz estimates (see \cite{KT}) in the framework of \eqref{eigen} with $F_k(x,y)=0,$ we achieve
$$
\|e^{ i t (\Delta_{x}-\nu_k^2)}f_k\|_{L^q_t L^r_x}\leq C\|f_k\|_{H_x^s},
$$
with $(q, r)$ and $s$ as in \eqref{eq:200p}. We see that the presence of the unimodular factor $e^{-itk^2}$ does not affect the above estimates, moreover the constant $C>0$ does not depend on $k$.
Summing over $k$ the squares we arrive at
$$
\|e^{ i t (\Delta_{x}-\nu_k^2)}f_k\|_{\ell_k ^2 L^q_t L^r_x}\leq C\|f_k\|_{\ell_k ^2 H_x^s},
$$
furthermore, because of $q,r \geq 2,$ the Minkowski inequality yields
$$
\|e^{ i t (\Delta_{x}-\nu_k^2)}f_k\|_{ L^q_t L^r_x\ell_k ^2}\leq C\|f_k\|_{H_x^s\ell_k ^2}.
$$
By \eqref{dec1} and Plancherel identity we infer
$$
\|e^{ i t \Delta_{x,y}}f\|_{ L^q_t L^r_xL_y ^2}\leq C\|f\|_{H_x^sL_{y}^2},
$$
that is estimate \eqref{eq:200p} in the case $\sigma=0.$ Now it is sufficient to commute the equation 
\eqref{eq.cauchy} with the operator $(1-\Delta_y)^{\frac \sigma 2}$, then the estimate  \eqref{eq:200p} follows. 

If we pick up now $f_k(x,y)=0$ in the context of \eqref{eigen}, notice that we can proceed as above. In fact by an use of  the extended inhomogeneous Strichartz estimates (we remand again to \cite{Fo}) we get
\begin{align}
 \label{eq:202x} \left\|\int_0^t e^{ i (t-\tau) (\Delta_{x}-\nu_k^2)} F_k(s,\cdot)
d\tau\right\|_{L^q_t L^r_x}\leq C\norm{F_k}{L^{\tilde{q}'}_t
L^{\tilde{r}'}_x}.
\end{align}
with $(q, r)$ and $(\tilde{q},\tilde{ r})$ satisfying the relation \eqref{StrTV}.
Summing once again over $k$ the squares we have
\begin{align}
 \label{eq:202xy} \left\|\int_0^t e^{ i (t-\tau) (\Delta_{x}-\nu_k^2)} F_k(s,\cdot)
d\tau\right\|_{\ell_k ^2L^q_t L^r_x}\leq C\norm{F_k}{\ell_k ^2L^{\tilde{q}'}_t
L^{\tilde{r}'}_x},
\end{align}

Furthermore, because of $\max(\tilde{q}', \tilde{r}')\leq 2\leq \min(q,r),$ one can apply Minkowski inequality combined with Plancherel identity and then it is possible to write 
 \begin{align*}
\left\|\int_0^t e^{ i (t-\tau) \Delta_{x,y}} F_k(s,\cdot)
d\tau\right\|_{L^q_t L^r_x L_y^2}\leq C\norm{F_k}{L^{\tilde{q}'}_t
L^{\tilde{r}'}_xL^2_y}.
\end{align*} 
Finally, the classical argument of Fourier analysis theory  used in the proof of homogeneous estimates enhances that estimate \eqref{eq:202p} is proved. 
\end{proof}

We need also of an useful lemma deduced from \cite{TV} (see also \cite{TTV}). By this end we recall from
\cite{KT} the following:

\begin{definition}\label{Sadm}
An exponent pair $(\ell, p)$ is Schr\"odinger-admissible if $2\leq \ell,p\leq \infty,$  $(\ell, p, n ) \neq (2,\infty, 2),$ and
\begin{align}\label{StrTV}
\frac 2\ell +\frac n{p}=\frac n2.
\end{align}
\end{definition} 

Then we earn
\begin{lem}
Let be $n\geq1$ and indicate by $\mathcal D_i$ the operators $\mathcal D_1=\nabla_x$ and $\mathcal D_2=\nabla_y,$ 
then we have for $\gamma=0,1$ the following estimates
\begin{align}
\label{eq:200p1}
 \|\mathcal D^\gamma_i e^{ i t \Delta_{x,y}} f\|_{L^\ell_t L^p_xH_y^{\sigma}} + \left \|\mathcal D^\gamma_i \int_0^t e^{- i (t-\tau) \Delta} F(\tau,\cdot)
d\tau\right\|_{L^\ell_t L^p_x H_y^{\sigma}}&\nonumber\\
\leq C\|\mathcal D^\gamma_if\|_{ L^2_xH^\sigma_y}+\norm{\mathcal D^\gamma_iF}{L^{\tilde \ell'}_t
L^{\tilde p'}_xH_y^{\sigma}}&,
 \end{align}
 if for the pairs  $(\ell, p)$ and $(\tilde \ell, \tilde p)$ the condition \eqref{StrTV}
is satisfied for $\ell, \tilde \ell\geq2,$ if $n\geq3,$ $\ell, \tilde \ell>2$ if $n=2,$ and $\ell, \tilde \ell\geq4$ if $n=1$. 
Moreover we have also the estimates
\begin{align}\label{eq:200pb3}
\|\mathcal D^\gamma_ie^{ i t \Delta_{x,y}} f\|_{L^\infty_t L^2_xH_y^\sigma} +\left\|\mathcal D^\gamma_i\int_0^t  e^{- i (t-\tau) \Delta} F(s,\cdot)
d\tau\right\|_{L^\infty_t L^2_x H_y^{\sigma}}&\\
\leq C\|\mathcal D^\gamma_if\|_{ L^2_xH^\sigma_y}+ C\norm{\mathcal D^\gamma_iF}{L^{\tilde \ell'}_t
L^{\tilde p'}_xH_y^{\sigma}},&\nonumber
\end{align}
 with $(\ell,p)$ and $\gamma$ as above.
\end{lem}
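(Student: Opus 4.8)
The plan is to read both families of bounds off Theorem \ref{StriProd}, treating $\gamma=0$ and $\gamma=1$ separately and, in the latter case, distinguishing the Euclidean derivative $\mathcal D_1=\nabla_x$ from the manifold derivative $\mathcal D_2=\nabla_y$.

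For $\gamma=0$ there is essentially nothing to prove beyond matching hypotheses. Any pair that is Schr\"odinger-admissible in the sense of Definition \ref{Sadm}, i.e. satisfying \eqref{StrTV}, is in particular Schr\"odinger-acceptable (Definition \ref{accept}); moreover, when two admissible pairs $(\ell,p)$ and $(\tilde\ell,\tilde p)$ are taken, the balance \eqref{eq.203p} holds automatically, since inserting $\tfrac2\ell=\tfrac n2-\tfrac np$ and $\tfrac2{\tilde\ell}=\tfrac n2-\tfrac n{\tilde p}$ into $\tfrac1\ell+\tfrac1{\tilde\ell}$ produces exactly $\tfrac n2\round{\tfrac1{\tilde p'}-\tfrac1p}$. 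Hence the homogeneous half of \eqref{eq:200p1} is \eqref{eq:200p} with $s=0$, the retarded half is \eqref{eq:202p}, and the admissibility ranges on $\ell,\tilde\ell$ in the three dimensional regimes are precisely those inherited from Theorem \ref{StriProd}. Estimate \eqref{eq:200pb3} is then just the endpoint $(\ell,p)=(\infty,2)$ of the same theorem: its homogeneous part is the unitarity of $e^{it\Delta_{x,y}}$ on $L^2_xH^\sigma_y$ (the evolution commutes with $\round{1-\Delta_y}^{\sigma/2}$), and its inhomogeneous part is \eqref{eq:202p} read with $(q,r)=(\infty,2)$.

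For $\gamma=1$ I would commute the derivative through the propagator and the retarded Duhamel operator and then apply the $\gamma=0$ bounds to $\mathcal D_i f$ and $\mathcal D_i F$. When $\mathcal D_1=\nabla_x$ this is immediate: $\nabla_x$ is a Fourier multiplier in $x$, so it commutes exactly with $e^{it\Delta_{x,y}}$ and with the Duhamel integral, giving $\nabla_x e^{it\Delta_{x,y}}f=e^{it\Delta_{x,y}}\nabla_x f$ and similarly for the forced term, after which the $\gamma=0$ estimates close the argument with the correct right-hand side $\|\nabla_x f\|_{L^2_xH^\sigma_y}$.

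The one place demanding an extra idea, and the main obstacle, is $\mathcal D_2=\nabla_y$, because on a general compact manifold $\nabla_y$ does not commute with $\Delta_y$. The key observation is that $\round{1-\Delta_y}^{1/2}$, being a function of the Laplace--Beltrami operator acting only in $y$, does commute with $e^{it\Delta_{x,y}}$ and with the Duhamel operator. I would therefore factor $\nabla_y=\bigl(\nabla_y\round{1-\Delta_y}^{-1/2}\bigr)\round{1-\Delta_y}^{1/2}$, use that the Riesz-type operator $\nabla_y\round{1-\Delta_y}^{-1/2}$ is bounded on $H^\sigma_y$, commute $\round{1-\Delta_y}^{1/2}$ through the evolution, and invoke the $\gamma=0$ estimate applied to $\round{1-\Delta_y}^{1/2}f$ (respectively $\round{1-\Delta_y}^{1/2}F$). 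This reduces everything to the $y$-norm equivalence $\|\nabla_y\,\cdot\|_{H^\sigma_y}\sim\|\round{1-\Delta_y}^{1/2}\,\cdot\|_{H^\sigma_y}$ on the compact manifold, valid up to lower-order terms that are harmlessly absorbed. The crucial structural feature is that this curvature-induced noncommutation lives entirely in the compact variable and is handled by the functional calculus of $\Delta_y$, without touching the dispersive mechanism in $x$; this is exactly why Theorem \ref{StriProd} transfers with the derivatives inserted.
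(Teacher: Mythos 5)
Your derivation is essentially the intended one: the paper gives no proof of this lemma at all (it is stated as ``deduced from \cite{TV}''), and the route you take --- read the $\gamma=0$ case off Theorem \ref{StriProd} and then commute the derivatives through the propagator and the Duhamel operator --- is exactly how it is obtained there. Your verification that Schr\"odinger-admissibility implies acceptability, and that imposing \eqref{StrTV} on both pairs forces the scaling relation \eqref{eq.203p}, is correct; so is the treatment of $\nabla_x$ by exact commutation and of $\nabla_y$ via the functional calculus of $\Delta_y$ together with the elliptic equivalence $\|g\|_{H^{\sigma+1}_y}\sim\|g\|_{H^{\sigma}_y}+\|\nabla_y g\|_{H^{\sigma}_y}$ on a compact manifold. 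The zeroth-order terms this last step produces are harmless here, since in the paper the lemma is only ever applied with the $\gamma=0$ and $\gamma=1$ bounds summed (as in the norms \eqref{norm2}--\eqref{norm3}).

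One point does not go through as written. For $n\geq3$ the lemma allows the double endpoint $\ell=\tilde\ell=2$, but there the extended inhomogeneous estimate \eqref{eq:202p} is not available: the first condition in \eqref{eq.204p} reads $\frac1q<\frac1{\tilde q'}$, i.e.\ $\frac1\ell+\frac1{\tilde\ell}<1$, which fails precisely when $\ell=\tilde\ell=2$ (this is the only admissible--admissible configuration excluded, since the ratio condition on $p/\tilde p$ is automatic). For that single case you should not route through \eqref{eq:202p} but instead apply the classical Keel--Tao retarded estimate, which does cover the double endpoint for admissible pairs, level by level in the eigenfunction decomposition used in the proof of Theorem \ref{StriProd}, and then sum with Minkowski (legitimate since $\tilde p'\leq 2\leq p$). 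With that case patched, your argument is complete.
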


\section{Proof of  Theorems  \ref{halflwpWeak} and \ref{halflwpStrong}}\label{pfmt}
This section, where it is assumed $\mu=\frac 4n$, has a double aim: firstly to show that to any choice of $n\geq1$, $k\geq2$ and small initial data in $\mathscr H^{0,\sigma}_{x,y}$ it is possible to prove that 
$u\in  L^{\ell}_{t}L^{ p}_{x}H^\sigma_y$ for any Schr\"odinger admissible pair $(\ell, p)$ by relying only on the structure of the Strichartz estimates. Secondly to display how to use the previous analysis to get  well-posedness in the spaces $L^\infty( \R; \mathscr H^{0,\sigma}_{x,y})$ (and $L^\infty( \R;H^{1}_{x,y})$ in the case $k=2)$ for the solutions to the Cauchy problem \eqref{eq.lwanon}, once one proves that the solutions lie in an auxiliary space needed to secure the closure of the fixed point argument. Finally we get scattering in $\mathscr H^{0,\sigma}_{x,y}$
(and in $H^{1}_{x,y}$ for $k=2$).

\begin{proof}[\bf{Proof of Theorem \ref{halflwpWeak}}]
Let be defined the integral operator associated to the Cauchy problem \eqref{eq.lwanon} with $\mu=\frac 4n$,
\begin{align}\label{eq.intop}
\mathcal{T}_fu=e^{it\Delta_{x,y}}f+  \lambda\int_{0}^{t} e^ {i
(t-\tau) \Delta_{x,y}} u(\tau)|u(\tau)|^\mu d\tau.
\end{align}
%is a strict contraction on the space 
%$ L^\infty_t((-T, T); H_x^{s}(\R^n;H_y^{1/2}(
%\R^n\times M^1)})>0,$ provided $\mu<4/n.$ 
One needs to show that for $\overline \ell=\overline p=\mu+2$,
\begin{align*}%\label{eq.1st1}
\forall f \in \mathscr H^{0,\sigma}_{x,y} \ \ \ \text{with} \ \ \|f\|_{\mathscr H^{0,\sigma}_{x,y}}<\varepsilon,\ \ \ \  \exists ! \   u(t,x,y)\in L^{\overline\ell}_{t}L^{\overline p}_{x}H^\sigma_y,
\end{align*}
satisfying the property
\begin{equation}\label{eq.fix}
\mathcal{T}_fu(t)=u(t).
\end{equation}
As well as we require 
$$
u(t,x,y)\in L^\infty(\R; \mathscr H^{0,\sigma}_{x,y})\cap L^{\overline\ell}_{t}L^{\overline p}_{x}H^\sigma_y.
$$
For simplicity, we split the proof in three further different steps.
\\
\\
\emph {Step One: for any $\sigma>\frac k2$, $\exists \, \varepsilon=\varepsilon(\sigma)>0$ and an $R=R(\sigma)>0$, such that  
\begin{equation}\label{embedd}
\mathcal{T}_f(B_{L^{\overline\ell}_{t}L^{\overline p}_{x}H^\sigma_y}(0,R))\subset B_{L^{\overline\ell}_{t}L^{\overline p}_{x}H^\sigma_y}(0,R),
\end{equation}
for any $f \in \mathscr H^{0,\sigma}_{x,y}$ so that $\|f\|_{\mathscr H^{0,\sigma}_{x,y}}<\varepsilon$}.
\\
\\
From now it is sufficient to deal directly with the case 
$(t_1,t_2)=\R$, specifying a different domain for the $t$-variable when it is required. We need to show \eqref{embedd}, to this end we start 
by applying the Sobolev fractional inequality \eqref{eq.LF} in combination with the embedding $H^\sigma_y\subset L^\infty_y$ which 
infers to the inequality
 \begin{align}\label{eq.1non0}
\|u|u|^\mu (t,x,\cdot)\|_{H_y^{\sigma}}  \leq
  \|u(t,x,\cdot)\|_{H_y^{\sigma}}^{\mu+1},
\end{align}
satisfied if $\frac k2<\sigma<1+\mu$ with $\mu> 0$.
Then, having in mind that $\mu=\frac 4n$, we can estimate the nonlinear term in the $L^{\overline\ell}_{t}L^{\overline p}_{x}H^\sigma_y$-norm by using the inhomogeneous Strichartz estimates in \eqref{eq:200p1} with $\gamma=0$, that is
 \begin{align}\label{eq.1nonl}
\|u|u|^\mu (t,x,\cdot)\|_{L^{\tilde{\ell}'}_t
L^{\tilde{p}'}_xH_y^{\sigma}}  \leq
  \| \|u(t,x,\cdot)\|_{H_y^{\sigma}}^{\mu+1}\|_{L^{\tilde{\ell}'}_tL^{\tilde{p}'}_x}.
\end{align}
Let us select 
\begin{align}\label{eq.condition1}
\frac{1}{\tilde{\ell}'}=\frac{\mu+1}{\overline\ell}=\frac{\mu+1}{\mu+2}, \ \ \  \frac{1}{\tilde{p}'}= \frac{(\mu+1)}{\overline p}=\frac{\mu+1}{\mu+2},
\end{align}
 then one observes that the r.h.s. of inequality \eqref{eq.1nonl}
 can be controlled by
\begin{align}\label{eq.2nonla}
  \| \|u(t,\cdot,\cdot)\|_{L^{\overline p}_x H_y^{\sigma}}^{\mu+1}\|_{L^{\tilde{\ell}'}_t}\leq\|u\|_{L^{(\mu+1)\tilde{\ell}'}_tL^{\overline p}_xH_y^{\sigma}}^{\mu+1}\leq\|u\|_{L^{\overline\ell}_tL^{\overline p}_x H_y^{\sigma}}^{\mu+1}.
  \end{align}
Thus we arrive at the following 
\begin{align}\label{eq.2nonl}
  \|\mathcal{T}_fu\|_{L^{\overline\ell}_tL^{\overline p}_x H_y^{\sigma}} \ \leq
    C \|f\|_{ \mathscr H^{0,\sigma}_{x,y}} + C \abs\lambda \|u\|_{L^{\overline\ell}_tL^{\overline p}_x H_y^{\sigma}}^{\mu+1}
\end{align}
and by a standard continuity argument (see for example Theorem 6.2.1 in \cite{Caz}) the previous estimate guarantees the existence of
an $\varepsilon>0$ and $R(\varepsilon)>0$ such that $\lim_{\varepsilon\rightarrow 0 }R(\varepsilon)=0$, provided that $\|f\|_{\mathscr H^{0,\sigma}_{x,y}}<\varepsilon$.
Additionally, according to the Remark \ref{StRem}, we get the following strong space-time bound
\begin{align}\label{eq.SpTim}
  \|u\|_{L^{\ell}_tL^{p}_x H_y^{\sigma}} \ \leq C\varepsilon
    <1,
\end{align}
with $C>0$, for the full set of Strichartz exponents $(\ell,p)$ given as in Definition \ref{Sadm}.
\\
\\
\emph {Step Two: $\mathcal{T}_f$ is a contraction on $B_{L^{\overline\ell}_{t}L^{\overline p}_{x}H^\sigma_y}(0,R),$ equipped
 with the norm $\norm{.}{L^{\overline\ell}_tL^{\overline p}_x L_y^{2}}.$}
\\
\\
 Given any $v_1,v_2\in B_{L^{\overline\ell}_{t}L^{\overline p}_{x}H^\sigma_y}$  we achieve, by a further use of the inhomogeneous estimate in \eqref{eq:200p1}, the chain of bounds
\begin{align}\label{eq.4nonl}
  \|\mathcal{T}_fv_1-\mathcal{T}_f v_2\|_{L^{\overline \ell}_t L^{\overline p}_xL_y^2} \leq  \abs\lambda \|v_1|v_1|^\mu-v_2|v_2|^\mu\|_{L^{\tilde{\ell}'}_t
L^{\tilde{p}'}_xL_y^2} \leq& \nonumber\\
\leq   \abs\lambda\norm{\|v_1-v_2\|_{L_y^2}(\norm{v_1}{L_y^\infty}^\mu+\norm{v_2}{L_y^\infty}^\mu)}{L^{\tilde{\ell}'}_t
L^{\tilde{p}'}_x}\leq&\nonumber\\
\leq  \abs\lambda\norm{\|v_1-v_2\|_{L^{\overline p}_xL_y^2}(\norm{v_1}{L^{\overline p}_x H^\sigma_y}^\mu+\norm{v_2}{L^{\overline p}_x H^\sigma_y}^\mu)}{L^{\tilde{\ell}'}_t}&,
\end{align}
where in the last inequality we used the second of the identities in \eqref{eq.condition1} and again the embedding $H^\sigma_y\subset L^\infty_y$. By Minkowski and H\"older inequalities the term in the third line of the previous \eqref{eq.4nonl}
can be bounded as follows

\begin{align}\label{eq.5nonl}
  \abs\lambda\norm{\|v_1-v_2\|_{L^{\overline p}_xL_y^2}(\norm{v_1}{L^{\overline p}_x  H_y^{\sigma}}^\mu+\norm{v_2}{L^{\overline p}_x  H_y^{\sigma}}^\mu)}{L^{\tilde{\ell}'}_t}\leq&\nonumber\\
 \leq C \abs\lambda \norm{\|v_1-v_2\|_{L^{\overline p}_xL_y^2}\norm{v_1}{L^{\overline p}_x  H_y^{\sigma}}^\mu}{L^{\tilde{\ell}'}_t}+\norm{\|v_1-v_2\|_{L^{\overline p}_xL_y^2}\norm{v_2}{L^{\overline p}_x  H_y^{\sigma}}^\mu}{L^{\tilde{\ell}'}_t}\leq&
 \nonumber\\
 \leq C \abs\lambda \|v_1-v_2\|_{L^{\overline \ell}_t L^{\overline p}_xL_y^2}\left (\norm{v_1}{L^{\overline \ell}_tL^{\overline p}_x  H_y^{\sigma}}^\mu+\norm{v_2}{L^{\overline \ell}_tL^{\overline p}_x  H_y^{\sigma}}^\mu\right),&
% \nonumber\\
%\leq C\lambda T^{\beta(n,\mu)}R^\mu\|u-v\|_{L^{q}_t L^{r}_x\dot H_y^{\sigma}}.&
\end{align}
here the last inequality is a consequence now of the first of the identities in \eqref{eq.condition1}. Thus we arrive at

\begin{align}\label{eq.6nonl}
  \|\mathcal{T}_f(v_1-v_2)\|_{L^{\overline \ell}_t L^{\overline p}_xL_y^2}\\
\leq C \abs\lambda \|v_1-v_2\|_{L^{\overline \ell}_t L^{\overline p}_xL_y^2}\sup_{i=1,2}\left \{\norm{v_i}{L^{\overline \ell}_t L^{\overline p}_x H_y^{\sigma}} \right\}^\mu
\leq C \abs\lambda (R(\varepsilon))^\mu \|v_1-v_2\|_{L^{\overline \ell}_t L^{\overline p}_xL_y^2},
\nonumber
\end{align}
where in the last line of the chain of above inequalities we applied the bound \eqref{eq.SpTim}. Then $\mathcal{T}_f$ is a contraction provided that $\varepsilon>0$ is suitable small.
\\
\\
\emph {Step Three: the solution exists and it is unique in $L^{\overline\ell}_{t}L^{\overline p}_{x}H^\sigma_y$}. 
\\
\\
We are in position to show existence and uniqueness of the solution applying the contraction principle to the map $\mathcal T_f$ defined on the complete metric space $B_{L^{\overline\ell}_{t}L^{\overline p}_{x}H^\sigma_y}(0,R)$ and equipped with the topology induced by $\norm{.}{L^{\overline\ell}_{t}L^{\overline p}_xL_y^2}.$
\\
\\
\emph {Step Four: Regularity of the solution: proof of  $u(t,x,y)\in L^\infty(\R;\mathscr H^{0,\sigma}_{x,y}).$}
\\
\\
It is enough
to argue as in the previous steps just exploiting estimates \eqref{eq:200pb3} instead of  \eqref{eq:200p1} in the proof of \eqref{eq.2nonl}.  This observation, combined with estimate \eqref{eq.2nonl}, enhances to
\begin{align}\label{eq.2nonl3inf}
  \|\mathcal{T}_fu\|_{L_t^\infty \mathscr H^{0,\sigma}_{x,y}} \leq C \|f\|_{\mathscr H^{0,\sigma}_{x,y}} + C \abs\lambda \|u\|_{L^{\overline\ell}_tL^{\overline p}_x H_y^{ \sigma}}^{\mu}.
  \end{align}
   The above inequality with $\|f\|_{\mathscr H^{0,\sigma}_{x,y}}<\varepsilon$ and inequality \eqref{eq.SpTim} guarantee the fact that $u(t,x,y)\in L_t^\infty(\R;\mathscr H^{0,\sigma}_{x,y}).$
   \\
   \\
  The proof of the part of Theorem \ref{halflwpWeak} concerning the global well-posedness is accomplished. The remaining asymptotic completeness property \eqref{HSscatt} follows easily by standard arguments, we remand for instance to \cite{Caz} and \cite{Tao}.
   \end{proof}
   We are able now to give the following. 
  
  \begin{proof}[\bf{Proof of Theorem \ref{halflwpStrong}}]
From the proof of the previous Theorem \eqref{halflwpWeak} we already know that
there exists a unique solution to \eqref{eq.lwanon}, 
$u(t,x,y)\in L^{\overline\ell}_{t}L^{\overline p}_{x}H^\sigma_y$ once $k>\frac \sigma2$ and $\|f\|_{\mathscr H^{0,\sigma}_{x,y}}<\varepsilon$. 
Consider now the auxiliary norms
\begin{align}
\label{norm2}\| u\|_{\mathscr X_{t,x,y}^{(1)}(\ell,p)} = \sum_{k=0,1}\norm{\nabla^k_xu(t,x, y)}{{ L^\ell_tL^{p}_xL_y^2}},& \\ 
\label{norm3}\| u\|_{\mathscr X_{t,x,y}^{(2)}(\ell,p)} =  \sum_{k=0,1}\norm{\nabla^k_yu(t,x, y)}{{ L^\ell_tL^{p}_xL_y^2}},&
\end{align}
where $(\ell, p)$  are Schr\"odinger-admissible pairs. Then we can start by proving  the following.
\\
\\
\emph {Step One:  Let $u(t,x,y)$ be the unique solution to \eqref{eq.lwanon} with  
 initial data $f \in H_{x,y}^1\cap \mathscr H^{0,\sigma}_{x,y}$ such that $\|f\|_{\mathscr H^{0,\sigma}_{x,y}}<\varepsilon$. Then 
 \begin{equation}\label{sett}
 \| u(t,x,y)\|_{\mathscr X_{t,x,y}^{(1)}(\overline \ell,\overline p)}
 + \| u(t,x,y)\|_{\mathscr X_{t,x,y}^{(2)}(\overline \ell,\overline p)}
<\infty,
\end{equation}
with $\overline \ell=\overline p=\mu+2$.}
 \\
 \\
To this end we display that the classical Strichartz estimates \eqref{eq:200p1} in connection with the H\"older inequality yield for any $i=1,2$ and $(\tilde \ell', \tilde p')$ as in \eqref{eq.condition1},
\begin{align}
\label{eq.nonl3w}\|\mathcal{T}_fu\|_{\mathscr X_{t,x,y}^{(i)}(\ell,p)} \leq
\|\mathcal D^k_i f\|_{L^2_{x,y}} +  \abs\lambda\|\mathcal D^k_i (u|u|^\mu)\|_{L^{\tilde\ell'}_tL^{\tilde p'}_xL^2_y}&\\
\leq C \|\mathcal D^kf\|_{L_{x,y}^2} + C \abs\lambda\big \|\|\mathcal D^k_i u(t,x, y)\|_{L^2_y} \|u(t, x, y)\|_{L^\infty_y}^\mu \big \|_{L^{\tilde\ell'}_tL^{\tilde p'}_x}&\nonumber\\
\leq C \|D^kf\|_{L_{x,y}^2} +C \abs\lambda \big \|\|\mathcal D^k_iu(t,x, y)\|_{L^2_y} \|u(t, x, y)\|_{ H^\sigma_y}^\mu \big \|_{L^{\tilde\ell'}_tL^{\tilde p'}_x}&\nonumber\\
\leq C \|\mathcal D^kf\|_{L_{x,y}^2} + C \abs\lambda  \|\mathcal D^k_i u(t,x, y)\|_{L^{\overline\ell}_tL^{\overline p}_xL^2_y} \|u(t, x, y)\|_{L^{\overline\ell}_tL^{\overline p}_x H^\sigma_y}^\mu.&\nonumber
\end{align}

In that way we must have
\begin{align}\label{eq.2nonl2}
  \|\mathcal{T}_fu\|_{\mathscr X^{(i)}_{t,x,y}(\overline\ell,\overline p)}
   \leq C \|f\|_{H_{x,y}^1} + C \abs\lambda \|u\|_{\mathscr X^{(i)}_{t,x,y}(\overline\ell,\overline p)}\|u\|_{L^{\overline\ell}_tL^{\overline p}_x H^\sigma_y}^{\mu}\\\nonumber
   \leq C \|f\|_{H_{x,y}^1} + C (R(\varepsilon))^{\mu}\abs\lambda \|u\|_{\mathscr X^{(i)}_{t,x,y}(\overline\ell,\overline p)},
  \end{align}
  where in the second line we have used the bound \eqref{eq.SpTim}. The proof of \eqref{sett} is thus complete.
 \\
\\
\emph {Step Two: Regularity of the solution: proof of  $u(t,x,y)\in L^\infty( \R;H^{1}_{x,y}).$}
\\
\\
It is enough
to argue as in the previous step just using estimates \eqref{eq:200pb3} instead of  \eqref{eq:200p1} in the proof of \eqref{eq.nonl3w}. This fact gives
\begin{align}\label{eq.2nonl3}
  \|\mathcal{T}_fu\|_{L^\infty_{(t_1,t_2)}L^2_{x,y}} +\sum_{i=1,2}  \|\mathcal D_i\mathcal{T}_fu\|_{L^\infty_{(t_1,t_2)}L^2_{x,y}}\leq\nonumber\\
  C \|f\|_{H_{x,y}^1} + C \abs\lambda  \|u\|_{\mathscr X^{(i)}_{(t_1,t_2),x,y}(\overline\ell,\overline p)}\|u\|_{L^{\overline\ell}_{(t_1,t_2)}L^{\overline p}_x H^\sigma_y}^{\mu},
  \end{align}
  An use of \eqref{eq.SpTim} and \eqref{sett} provided that $\|f\|_{\mathscr H^{0,\sigma}_{x,y}}<\varepsilon$ allows to take $(t_1,t_2)=\R$ and enhances to $u(t,x,y)\in L^\infty_tH^{1}_{x,y}.$
\\
\\
As a straightforward consequence we have \eqref{lwp2b} in the case of \eqref{defoc}. In the focusing case we are forced to necessitate in \eqref{focus} also that  $\|f\|_{H^1_{x,y}}<\varepsilon$, in order to avoid some blow-up phenomena, as noticed in the paper \cite{KM}.
 \\
  \\
 It remains to establish the asymptotic completeness property \eqref{H1scatt}. It follows by applying a standard argument (see \cite{Caz}).
In fact by using the integral equation associated with \eqref{eq.lwanon} it is sufficient to prove that
\begin{equation}\label{kdv}\lim_{t_1, t_2\rightarrow \infty}
\|\int_{t_1}^{t_2} e^{-i s\Delta_{x, y}} (u|u|^\mu) ds\|_{H^1_{x,y}}=0.
\end{equation}
From the dual estimate to the homogeneous inequality in \eqref{eq:200p1}
we get:
\begin{equation}\label{dual}
\|\int_{t_1}^{t_2} e^{-i s\Delta_{x, y}} F(s) ds\|_{L^2_{x,y}}
\leq C \|F\|_{L^{\tilde \ell'}_{(t_1, t_2)} L^{\tilde p'}_x L^2_y},
\end{equation}
where $(\tilde \ell', \tilde p')$ are as in \eqref{eq.condition1}. Hence
\eqref{kdv} follows if one earns 
\begin{align}
\lim_{t_1, t_2\rightarrow \infty} \|u|u|^\mu\|_{L^{\tilde\ell'}_{(t_1, t_2)} L^{\tilde p'}_x L^2_y}\\
+\lim_{t_1, t_2\rightarrow \infty}\big(
\|\nabla_y (u|u|^\mu)\|_{L^{\tilde\ell'}_{(t_1, t_2)} L^{\tilde p'}_x L^2_y}
+ \|\nabla_x (u|u|^\mu)\|_{L^{\tilde\ell'}_{(t_1, t_2)} L^{\tilde p'}_x L^2_y}\big)=0.
\nonumber
\end{align}
The above limit can be proved following the same argument used along the proof
of the previous steps, in conjunction with the fact that $u(t,x,y)\in L^{\overline\ell}_{t}L^{\overline p}_{x}H^\sigma_y$ and \eqref{sett}.

\end{proof}
\begin{remark}\label{StRem}
We underline here that the unique global solution $u(t,x,y)\in  L^{\mu+2}_t(\R; L^{\mu+2}_x(\Rn\times  H_y^{\sigma}(\M^k))$ earned in Theorems \ref{halflwpWeak} (and consequently in Theorem
\ref{halflwpStrong})
fulfills also
$$u(t,x,y)\in  L^{\ell}_t(\R; L^{p}_x(\Rn\times  H_y^{\sigma}(\M^k))$$
for the full set of Strichartz exponents $(\ell,p)$ as in Definition \ref{Sadm} because of \eqref{eq.SpTim}.
\end{remark}

\section{Proof of Theorem \ref{halflwpWeakExt}}

In this section we present the global well-posedness and scattering results when 
the range of the nonlinear power is slightly enlarged to $\frac 4n<\mu<\mu^*(n)$.
In this regime, we display the use of the extended inhomogeneous Strichartz estimates
\eqref{eq:202p}.
We need some preliminaries before to give the proof of the theorem. For this purpose we 
introduce the following:

\begin{prop}\label{AlgebricCauchy}
Let $n\geq 3$, $4/n\leq \mu<4/(n-1)$ be fixed and
$s=\frac{\mu n -4}{2\mu}$.
Then there exist
%\begin{align}\label{alg1} 
%\forall \bar s<s<1/2, \hbox{ where
%}\bar s=\bar s(n,\mu)=\begin{cases}
%\frac{n\mu-4}{2\mu}, \ \ \ \  \  \  \text{if} \ \ \ \mu>4/n\\
%0\ \ \ \ \ \,  \text{if} \ \ \ \mu<4/n,
%\end{cases}
%\end{align}
$(q, r, \tilde{q}, \tilde{r}) $
such that:
\begin{equation}\label{easyconitionv3}0<\frac 1q,\frac 1r, \frac 1{\tilde q}, \frac 1{\tilde r}<\frac 12
\end{equation} and
\begin{align}
\label{eq.ass3v3}\frac 1 q+ \frac 1{\tilde q}<1, \quad \quad 
& \frac{n-2}{n}<\frac{r}{\tilde{r}}< \frac{n}{n-2}\\
\label{dropd12}
\frac{1}{q}+\frac{n}{r}<\frac{n}{2}, \quad \quad & 
\frac{1}{\tilde{q}}+\frac{d}{\tilde{r}}<\frac{n}{2}\\
\label{eq.ass102v3}  \frac{2}{q} +\frac{n}{r}=\frac n2- s,\quad \quad &
\frac{2}{q} + \frac nr + \frac{2}{\tilde{q}} +\frac n{\tilde r}=n,\\
\label{eq.ass9bisv} \frac{1}{\tilde{q}'}=\frac{\mu+1}{q}, \quad \quad& \frac{1}{\tilde{r}'}=\frac{\mu+1}{r}.&
\end{align}
For $n=1,2$ we get the same conclusion, provided that we drop 
conditions \eqref{eq.ass3v3}. We can also assume that
\begin{equation}\label{impBosUl3_gif}
\frac{\mu}q+\frac{\mu n}{2r}=1,\quad \quad \ \frac \mu r<1.
\end{equation}
\end{prop}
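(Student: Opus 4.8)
The plan is to read the statement as a feasibility problem for the four reciprocals $\frac1q,\frac1r,\frac1{\tilde q},\frac1{\tilde r}$ and to reduce it to the non-emptiness of an explicit open interval for a single free parameter. First I would pass to the variables $a=\frac1q$ and $b=\frac1r$ and use the two relations \eqref{eq.ass9bisv}, together with $\frac1{\tilde q'}=1-\frac1{\tilde q}$ and $\frac1{\tilde r'}=1-\frac1{\tilde r}$, to eliminate $\tilde q,\tilde r$ entirely, writing $\frac1{\tilde q}=1-(\mu+1)a$ and $\frac1{\tilde r}=1-(\mu+1)b$.

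With these substitutions two simplifications occur. Using $s=\frac n2-\frac2\mu$ (immediate from $s=\frac{\mu n-4}{2\mu}$), the first identity of \eqref{eq.ass102v3} and the first identity of \eqref{impBosUl3_gif} both collapse to the single scaling line $2a+nb=\frac2\mu$; and the second identity of \eqref{eq.ass102v3} then follows automatically, since a direct computation gives $\frac2{\tilde q}+\frac n{\tilde r}=(n+2)-(\mu+1)(2a+nb)=n-\frac2\mu$, whence the full sum equals $n$. Thus the equality constraints leave exactly a one-parameter family, which I would parametrize by $a$, setting $b=\frac1n\big(\frac2\mu-2a\big)$.

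Next I would convert every remaining inequality into an explicit bound on $a$. The box \eqref{easyconitionv3} and the conditions $0<\frac1{\tilde q},\frac1{\tilde r}<\frac12$ force $a,b\in\big(\frac1{2(\mu+1)},\frac1{\mu+1}\big)$ with $a,b<\frac12$; the first inequality in \eqref{dropd12} holds automatically (since $a>0$ and $\frac2\mu\le\frac n2$ for $\mu\ge\frac4n$), while the second, read with $d=n$, is equivalent to the single upper bound $a<\frac{4-\mu(n-2)}{2\mu(\mu+1)}$; the constraint $\frac\mu r<1$ of \eqref{impBosUl3_gif} and $b<\frac12$ are automatic for $\mu\ge\frac4n$, $n\ge2$; and the ratio condition \eqref{eq.ass3v3}, written as $\frac r{\tilde r}=\frac1b-(\mu+1)$, becomes a two-sided bound on $b$, hence (through the line) one further lower and one further upper bound on $a$. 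At this stage the existence of the quadruple is equivalent to a finite list of strict comparisons of the form ``lower bound $<$ upper bound'', each an explicit inequality in $(n,\mu)$.

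The heart of the argument is to confirm these comparisons for every $n\ge3$ and $\frac4n\le\mu<\frac4{n-1}$, and this is where I expect the main difficulty. The governing comparison is the one between the lower endpoint $\frac1{2(\mu+1)}$ and the upper endpoint forced by \eqref{dropd12}: one checks that $\frac1{2(\mu+1)}<\frac{4-\mu(n-2)}{2\mu(\mu+1)}$ is equivalent to $\frac4\mu>n-1$, that is, precisely to $\mu<\frac4{n-1}$, so the admissible window is open exactly on the stated range and degenerates at the endpoint $\mu=\frac4{n-1}$; this pins down the essential role of the strict subcriticality hypothesis. The remaining comparisons — in particular those produced by the two-sided ratio condition \eqref{eq.ass3v3} — are of the same elementary nature but more tedious, and this is where $n\ge3$ enters; for $n=1,2$ the condition \eqref{eq.ass3v3} is dropped, so fewer comparisons survive and the interval is manifestly non-empty. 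Once the interval is seen to be non-empty I would pick any interior $a$, recover $(q,r,\tilde q,\tilde r)$, and observe that all conditions, being strict, persist. The main obstacle is thus not the reduction but the bookkeeping of how these competing bounds on $a$ depend on $(n,\mu)$ and the verification that they admit a common value throughout the half-open $\mu$-range.
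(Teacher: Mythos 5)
Your reduction is correct, but note that the paper does not actually prove this proposition: its entire ``proof'' is the sentence ``This Lemma is nothing else that Lemma 8.2 in \cite{TV2}, so we skip.'' So your self-contained feasibility argument is necessarily a different route, and a more informative one. Your algebra checks out at every step I verified: with $a=\frac1q$, $b=\frac1r$ and $s=\frac n2-\frac2\mu$, the two scaling identities do collapse to the single line $2a+nb=\frac2\mu$, the second identity of \eqref{eq.ass102v3} is then automatic via $\frac2{\tilde q}+\frac n{\tilde r}=(n+2)-(\mu+1)(2a+nb)=n-\frac2\mu$, the first inequality of \eqref{dropd12} is automatic from $\frac2\mu\le\frac n2$, the second (with the typographical $d$ read as $n$) is equivalent to $a<\frac{4-\mu(n-2)}{2\mu(\mu+1)}$, and the comparison of this with the lower endpoint $\frac1{2(\mu+1)}$ reduces exactly to $\mu(n-1)<4$, i.e.\ to $\mu<\frac4{n-1}$ --- so you have correctly located where the strict subcriticality is used and why the window degenerates at the endpoint. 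You also correctly observe that the first inequality of \eqref{eq.ass3v3} is vacuous ($a+1-(\mu+1)a<1$ is just $a>0$). What your write-up buys over the paper's citation is precisely this transparency. The one thing still owed is the completion of the remaining comparisons (the two-sided bounds on $b$ coming from the box and from the ratio condition $\frac r{\tilde r}=\frac1b-(\mu+1)\in(\frac{n-2}n,\frac n{n-2})$, transported to bounds on $a$ along the line $b=\frac1n(\frac2\mu-2a)$); you flag these as tedious but do not carry them out, so as written the argument is a correct and well-structured plan rather than a finished proof. Spot checks (e.g.\ $n=3$ with $\mu=\frac43$ and $\mu$ near $2$, $n=2$ with $\mu=2$, large $n$ with $\mu=\frac4n$) confirm the resulting interval is non-empty throughout the stated range, so the bookkeeping does close; I would simply ask you to write it out, paying attention to the borderline case $\mu=\frac4n$ with $n=2$, where the constraint $\frac1{\tilde r}>0$ holds only because $a$ is taken strictly above $\frac1{2(\mu+1)}$.
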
 
\begin{proof}
This Lemma is nothing else that Lemma 8.2 in \cite{TV2}, so we skip.
\end{proof}

Notice that the previous proposition allows a combined use of \eqref{eq:200p} and \eqref{eq:202p}, this fact means that we have the estimate
\begin{align}\label{eq:202p90}%\left \|\int_0^t e^{ i (t-s) \Delta_{x,y}} F(s,x, y)
%ds\right\|_{L^\infty_t L^2_xH_y^{\frac{1}{2}}}+ \nonumber\\
 &\|e^{-it\Delta_{x,y}} f\|_{L^{q}_t L^{r}_xH_y^{\sigma}} +\left \|\int_0^t e^{ -i (t-\tau) \Delta_{x,y}} F(\tau)
d\tau\right\|_{L^{q}_t L^{r}_xH_y^{\sigma}}\\
\nonumber &\leq C(\|f\|_{ H_x^{s}H_y^{\sigma}}+\|F\|_{L^{\tilde{q}'}_t
L^{\tilde{r}'}_xH_y^{\sigma}})
\end{align}
for every $\sigma\in \R$.
We will also need of the next result.
\begin{lem}\label{sTrbis}
Let $n\geq 1$ and $4/n\leq \mu<4/(n-1)$ be fixed.
Then there exist a Schr\"odinger-admissible pair $(\ell,p)$
such that:
\begin{align}
%\label{st0bis}& l> 2\\
%\label{st1bis}&\frac 2\ell + \frac np=\frac n2,\\
\label{st2bis}&\frac 1{p'}=\frac 1p+\frac \mu r,\\
\label{st3bis}&\frac 1{\ell'}=\frac 1\ell+ \frac \mu q,
\end{align}
where $(q,r)$ is any couple given by Proposition
\ref{AlgebricCauchy}.
\end{lem}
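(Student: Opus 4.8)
The plan is to treat \eqref{st2bis} and \eqref{st3bis} not as constraints to be satisfied by a mysterious pair but as \emph{definitions} of $(\ell,p)$ in terms of the data $(q,r)$ furnished by Proposition \ref{AlgebricCauchy}, and then to verify a posteriori that the resulting pair is Schr\"odinger-admissible in the sense of Definition \ref{Sadm}. Writing each condition with the H\"older conjugate, $\frac1{p'}=1-\frac1p$ and $\frac1{\ell'}=1-\frac1\ell$, equations \eqref{st2bis} and \eqref{st3bis} become $1-\frac1p=\frac1p+\frac\mu r$ and $1-\frac1\ell=\frac1\ell+\frac\mu q$, which solve uniquely to
\begin{equation*}
\frac1p=\frac12-\frac{\mu}{2r},\qquad \frac1\ell=\frac12-\frac{\mu}{2q}.
\end{equation*}
This is the candidate pair; the whole content of the lemma is that these two numbers are legitimate reciprocals of a Schr\"odinger-admissible exponent pair.

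First I would check the scaling identity $\frac2\ell+\frac np=\frac n2$. Substituting the formulas above gives
\begin{equation*}
\frac2\ell+\frac np=\Bigl(1-\frac\mu q\Bigr)+\Bigl(\frac n2-\frac{n\mu}{2r}\Bigr)=\frac n2+1-\Bigl(\frac\mu q+\frac{\mu n}{2r}\Bigr),
\end{equation*}
and by the extra normalization \eqref{impBosUl3_gif} in Proposition \ref{AlgebricCauchy} the parenthesis equals $1$, so the right-hand side is exactly $\frac n2$. Thus the admissibility scaling relation \eqref{StrTV} holds \emph{for free}, which is the key reason the lemma is true: the author has arranged in Proposition \ref{AlgebricCauchy} precisely the balance $\frac\mu q+\frac{\mu n}{2r}=1$ that forces the H\"older-dual exponents of the nonlinearity onto the admissible line.

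It then remains to confirm the range constraints $2\le \ell,p\le\infty$ and the exclusion $(\ell,p,n)\neq(2,\infty,2)$. Since $\mu,q,r>0$ one has $\frac\mu{2r}>0$ and $\frac\mu{2q}>0$, so $\frac1p<\frac12$ and $\frac1\ell<\frac12$, giving $p>2$ and $\ell>2$. For the upper endpoints I would use \eqref{impBosUl3_gif} again: it gives directly $\frac\mu r<1$, whence $\frac1p=\frac12-\frac\mu{2r}>0$ and $p<\infty$; and since $\frac{\mu n}{2r}>0$ it forces $\frac\mu q=1-\frac{\mu n}{2r}<1$, whence $\frac1\ell>0$ and $\ell<\infty$. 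Consequently $2<\ell,p<\infty$, so both endpoints are strict and the excluded case $(2,\infty,2)$ cannot occur. The main (and only) obstacle is bookkeeping: making sure that every inequality one needs, namely $0<\frac\mu r<1$ and $0<\frac\mu q<1$, is genuinely among the properties guaranteed for $(q,r)$ by Proposition \ref{AlgebricCauchy}. Once \eqref{impBosUl3_gif} is invoked these follow immediately, and no case distinction in $n$ is required because the verification never touches the dimension-dependent clauses of that proposition.
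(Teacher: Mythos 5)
Your proof is correct and is essentially the paper's argument in a reorganized order: the paper defines $p$ from \eqref{st2bis} and $\ell$ from the admissibility relation \eqref{StrTV} and then observes that \eqref{st3bis} reduces to $\frac{\mu n}{2r}+\frac{\mu}{q}=1$, i.e.\ \eqref{impBosUl3_gif}, whereas you define $(\ell,p)$ from \eqref{st2bis}--\eqref{st3bis} and check that \eqref{StrTV} reduces to the same identity. Your explicit verification that $2<\ell,p<\infty$ (using $\frac{\mu}{r}<1$ and $\frac{\mu}{q}=1-\frac{\mu n}{2r}<1$) is a welcome addition, since the paper outsources those range checks to Proposition 3.3 of \cite{TV2}.
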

\begin{proof}
Also here we have that above lemma is Proposition 3.3 in \cite{TV2}, with the following minor modification: the identities \eqref{StrTV} and \eqref{st2bis} imply
$$\frac 1\ell= \frac {\mu n}{4r},\quad \quad \frac 1p= \frac 12 - \frac \mu{2r}$$
and thus the condition 
\eqref{st3bis} becomes
\begin{equation}\label{usef}\frac{\mu n}{2r} +\frac{\mu}{q}=1,\
\end{equation}
which is verified by \eqref{impBosUl3_gif}.
\end{proof}

\begin{proof}[\bf{Proof of Theorem \ref{halflwpWeakExt}}]
Let us look first for the global well-posedness. We want to show that, for any $(q,r)$ as in Proposition \ref{AlgebricCauchy},
\begin{align*}%\label{eq.1st1Ex}
\forall f \in \mathscr H^{s,\sigma}_{x,y} \  \ \text{with}\ \ 0<s<\frac12 \ \ \text{and} \ \|f\|_{\mathscr H^{s,\sigma}_{x,y}}<\varepsilon, \ \ \  \exists ! \   u(t,x,y)\in L^{q}_t L^{r}_xH_y^{\sigma}
\end{align*}
and that the operator $\mathcal T_f$ defined in 
 \eqref{eq.intop} satisfies \eqref{eq.fix}. We require also for 
$
u(t,x,y)\in L_t^\infty( \R; \mathscr H^{0,\sigma}_{x,y}).
$

\vspace{0.2cm}
We proceed as usual by splitting the proof in further different steps. 
  Consider now the further resolution norm
 \begin{align}\label{eq.1st2}
\norm{w}{\mathcal Z^\sigma(q,r)}=\norm{w}{L^{q}_t L^{r}_xH_y^{\sigma}}+\norm{w}{L^{\ell}_t L^{p}_xH_y^{\sigma}},
\end{align}
where $(\ell, p)=(\ell(q,r),p(q,r))$ is the pair uniquely determined by the conditions \eqref{st2bis} and \eqref{st3bis} of Lemma \ref{sTrbis}. Then, similarly to the proof of Theorem \ref{halflwpWeak}, the first task consists in proving what follows.
\\
\\
\emph {Step One: for any $\sigma>\frac k2$, $\exists \, \varepsilon=\varepsilon(\sigma)>0$ and an $R=R(\sigma)>0$, such that  
\begin{equation*}%\label{embeddEx}
\mathcal{T}_f(B_{\mathcal Z^\sigma(q,r)}(0,R))\subset B_{\mathcal Z^\sigma(q,r)}(0,R),
\end{equation*}
for any $f \in \mathscr H^{s,\sigma}_{x,y}$ with $0<s<\frac12$ as in Proposition \ref{AlgebricCauchy} and so that $\|f\|_{\mathscr H^{s,\sigma}_{x,y}}<\varepsilon$.}
\\
\\
We need to deal first with the control of the nonlinear term in the $L^{q}_t L^{r}_xH_y^{\sigma}$-norm. The inhomogeneous Strichartz estimates contained in  \eqref{eq:202p90} in connection again with the fractional inequality \eqref{eq.1non0}, now with $\mu<\mu^*(n)$,  bring to
 \begin{align}\label{eq.1nonlEx}
\|u|u|^\mu (t,x,\cdot)\|_{L^{\tilde{q}'}_t
L^{\tilde{r}'}_xH_y^{\sigma}}  \leq
  \| \|u(t,x,\cdot)\|_{H_y^{\sigma}}^{\mu+1}\|_{L^{\tilde{q}'}_tL^{\tilde{r}'}_x}.
\end{align}
We select 
\begin{align*}%\label{eq.condition1}
\frac{1}{\tilde{r}'}=\frac{\mu+1}{r}, \ \ \  \frac{1}{\tilde{q}'}=\frac{(\mu+1)}{q},
\end{align*}
as in \eqref{eq.ass9bisv} and with $(q,r, \tilde{q}, \tilde{r})$ a point  allowed  by Proposition \ref{AlgebricCauchy}. An application of H\"older inequality gives that the l.h.s of \eqref{eq.1nonlEx}
 can be bounded by
\begin{align}\label{eq.2nonlaEx}
  \| \|u(t,\cdot,\cdot)\|_{L^{r}_x H_y^{\sigma}}^{\mu+1}\|_{L^{\tilde{q}'}_t}\leq\|u\|_{L^{(\mu+1)\tilde{q}'}_tL^{r}_x H_y^{\sigma}}^{\mu+1}\leq\nonumber\\
  \leq C\|u\|_{L^{q}_tL^{r}_x H_y^{\sigma}}^{\mu+1},%\leqCT^{\beta(n,\mu)}R^{\mu+1},
\end{align}
for some constant $C>0$. Finally we can write the following
\begin{align}\label{eq.2nonlEx}
  \|\mathcal{T}_fu\|_{L^{q}_t L^{r}_xH_y^{\sigma}} \ \leq
    C \|f\|_{H_x^{s} H_y^{\sigma}} + C  \abs\lambda \|u\|_{L^{q}_t L^{r}_xH_y^{\sigma}}^{\mu+1},
\end{align}
 with $0<s<\frac 12$ as in Proposition \ref{AlgebricCauchy}.
Furthermore the classical Strichartz estimate \eqref{eq:200p1} in connection with Lemma \ref{sTrbis} and an application of H\"older inequality
yield the following
\begin{align}
\label{eq.nonl3Ex}\|\mathcal{T}_fu\|_{L^{\ell}_t L^{p}_xH_y^{\sigma}} \leq
\| f\|_{\mathscr H^{0,\sigma}_{x,y}} +  \abs\lambda\|u|u|^\mu\|_{L^{\ell'}_tL^{p'}_xH_y^{\sigma}}&\\
\leq C \|f\|_{\mathscr H^{0,\sigma}_{x,y}} + C \abs\lambda\big \| \|u(t, x, y)\|_{H_y^{\sigma}}^{\mu+1} \big \|_{L^{\ell'}_TL^{p'}_x}&\nonumber\\
\leq C \|f\|_{\mathscr H^{0,\sigma}_{x,y}} +C \abs\lambda \big \|\|u(t,x, y)\|_{H_y^{\sigma}} \|u(t, x, y)\|_{ H^\sigma_y}^\mu \big \|_{L^{\ell'}_tL^{p'}_x}&\nonumber\\
\leq C \|f\|_{\mathscr H^{s,\sigma}_{x,y}} + C  \abs\lambda \| u(t,x, y)\|_{L^{\ell}_tL^{p}_xH_y^{\sigma}} \|u(t, x, y)\|_{L^{q}_tL^{r}_x H^\sigma_y}^\mu,&\nonumber
\end{align}
where in the last inequality we used, for any $s\geq0$, the embedding $\mathscr H^{s,\sigma}_{x,y}\subset \mathscr H^{0,\sigma}_{x,y}$, which is a direct consequence of Fubini's Theorem, Plancherel's identity w.r.t. the $x$-variable and the fact that the operators $\round{1-\Delta_x}^{-\frac s2} $ and $\round{1-\Del_{y}}^{-\frac \sigma2}$ commute. In that way we must have
\begin{align}\label{eq.2nonl2Ex}
  \|\mathcal{T}_fu\|_{L^{\ell}_t L^{p}_xH_y^{\sigma}} \leq
  C \|f\|_{\mathscr H^{s,\sigma}_{x,y}} + C \abs\lambda \|u\|_{L^{\ell}_t L^{p}_xH_y^{\sigma}}\|u\|_{L^{q}_tL^{r}_x H_y^{\sigma}}^{\mu}.
  \end{align}
A combination of \eqref{eq.2nonlEx} and  \eqref{eq.2nonl2Ex} brings to the inequality
  \begin{align}\label{eq.3nonl2Ex}
  \|\mathcal{T}_fu\|_{\mathcal Z^\sigma(q,r)} \leq
  C \|f\|_{\mathscr H^{s,\sigma}_{x,y}} + C \abs\lambda \|u\|_{\mathcal Z^\sigma(q,r)}\|u\|_{L^{q}_tL^{r}_x H_y^{\sigma}}^{\mu},
  \end{align}
  then it is enough to proceed as in the proof of Theorem \eqref{halflwpWeak}.
\\
\\
We have to approach now the further step.
\\
\\
\emph {Step Two: $\mathcal{T}_f$ is a contraction on $B_{\mathcal Z^\sigma(q,r)}(0,R),$ equipped
 with the norm $\norm{.}{L^{q}_tL^{r}_x L_y^{2}}.$}
 \\
 \\
 Given any $v_1,v_2\in B_{X^{\sigma}_T(q,r)}(0,R)$  we achieve, again by an use of estimates \eqref{eq:202p}, the chain of bounds
\begin{align}\label{eq.4nonlEx}
  \|\mathcal{T}_fv_1-\mathcal{T}_f v_2\|_{L^q_t L^r_x L_y^2} \leq \abs\lambda \|v_1|v_1|^\mu-v_2|v_2|^\mu\|_{L^{\tilde{q}'}_t
L^{\tilde{r}'}_x L_y^2} \leq& \nonumber\\
\leq \abs\lambda\norm{\|v_1-v_2\|_{L_y^{2}}(\norm{v_1}{ H_y^{\sigma}}^\mu+\norm{v_2}{ H_y^{\sigma}}^\mu)}{L^{\tilde{q}'}_t
L^{\tilde{r}'}_x}\leq&\nonumber\\
\leq  \abs\lambda\norm{\|v_1-v_2\|_{L^{r}_x L_y^2}(\norm{v_1}{L^{r}_x  H_y^{\sigma}}^\mu+\norm{v_2}{L^{r}_x  H_y^{\sigma}}^\mu)}{L^{\tilde{q}'}_t}&,
\end{align}
by Minkowski and H\"older inequalities we see that the last term in previous estimate \eqref{eq.4nonlEx}
can be controlled as follows

\begin{align}\label{eq.5nonlEx}
 \abs\lambda \norm{\|v_1-v_2\|_{L^{r}_x L_y^2}(\norm{v_1}{L^{r}_x  H_y^{\sigma}}^\mu+\norm{v_2}{L^{r}_x  H_y^{\sigma}}^\mu)}{L^{\tilde{q}'}_t}\leq&\nonumber\\
 \leq C \abs\lambda \norm{\|v_1-v_2\|_{L^{r}_x L_y^2}\norm{v_1}{L^{r}_x  H_y^{\sigma}}^\mu}{L^{\tilde{q}'}_t}+\norm{\|v_1-v_2\|_{L^{r}_x L_y^2}\norm{v_2}{L^{r}_x  H_y^{\sigma}}^\mu}{L^{\tilde{q}'}_t}\leq&
 \nonumber\\
 \leq C \abs\lambda \left (\norm{v_1}{L^{q}_tL^{r}_x  H_y^{\sigma}}^\mu+\norm{v_2}{L^{q}_tL^{r}_x  H_y^{\sigma}}^\mu\right)\|v_1-v_2\|_{L^{q}_t L^{r}_x L_y^2}.&
% \nonumber\\
%\leq C\lambda T^{\beta(n,\mu)}R^\mu\|u-v\|_{L^{q}_t L^{r}_x\dot H_y^{\sigma}}.&
\end{align}
Thus we arrive at

\begin{align}\label{eq.6nonlEx}
  \|\mathcal{T}_f(v_1-v_2)\|_{L^q_t L^r_x L_y^2}\\
\leq C \abs\lambda \|v_1-v_2\|_{L^q_t L^r_x L_y^2}\sup_{i=1,2}\left \{\norm{v_i}{L^{q}_tL^{r}_x  H_y^{\sigma}} \right\}^\mu\leq C \abs\lambda \|v_1-v_2\|_{L^q_t L^r_x L_y^2}(R(\varepsilon))^\mu\nonumber
\end{align}
and again $\lim_{\varepsilon\rightarrow 0 }R(\varepsilon)=0$ in the above estimate allows to conclude.
\\
\\
\emph {Step Three: the solution exists and it is unique in $\mathcal Z^\sigma(q,r).$ Furthermore we
have $u(t,x,y)\in L_t^\infty( \R; \mathscr H^{0,\sigma}_{x,y})$}. 
\\
\\
We apply the contraction principle to the map $\mathcal T_f$ defined on the complete metric space $B_{\mathcal Z^\sigma(q,r)}(0,R)$ and equipped with the topology induced by $\norm{.}{L^q_t L^r_x L_y^2}.$ Moreover to guarantees $u(t,x,y)\in L_t^\infty( \R; \mathscr H^{0,\sigma}_{x,y})$
it suffices, by arguing as in the previous steps just using estimates \eqref{eq:200pb3} instead of  \eqref{eq:200p1} in the proof of \eqref{eq.nonl3Ex}, the inequality

\begin{align}\label{eq.2nonl3Ex}
  \|\mathcal{T}_fu\|_{L_t^\infty \mathscr H^{0,\sigma}_{x,y}} \leq
  C \|f\|_{\mathscr H^{s,\sigma}_{x,y}} + C \abs\lambda  \|u\|_{\mathcal Z(q,r)}\|u\|_{L^{q}_tL^{r}_x  H_y^{\sigma}}^{\mu}.
  \end{align}
\\
\\
To complete the proof of Theorem \ref{halflwpWeakExt}, it remains to see for the asymptotic completeness property \eqref{HSSscatt}. It follows, in a standard fashion, by proving that
\begin{equation}\label{kdvEx}\lim_{t_1, t_2\rightarrow \infty}
\|\int_{t_1}^{t_2} e^{-i s\Delta_{x, y}} (u|u|^\mu) ds\|_{\mathscr H^{0,\sigma}_{x,y}}=0
\end{equation}
From the dual estimate \eqref{dual}
with $( \ell', p')$ chosen as in Lemma \ref{sTrbis}, we have that 
\eqref{kdvEx} is a consequence of  
\begin{align}
\lim_{t_1, t_2\rightarrow \infty} \|u|u|^\mu\|_{L^{\ell'}_{(t_1, t_2)} L^{p'}_x H_y^\sigma}=0.
\nonumber
\end{align}
Such a limit can be earned following the same scheme of the proof
of \eqref{eq.nonl3Ex}, in conjunction with the fact that $u(t,x,y)\in L^{q}_tL^{r}_x H_y^{\sigma}\cap L^{\ell}_{t}L^{p}_{x}H^\sigma_y$.

\end{proof}
\appendix

\section{A fractional inequality on compact manifolds} 
The target of this Appendix, having its own interest, is to present fundamental tools giving the way, in the end, to the proof of inequality \eqref{eq.1non0}.
Given any compact manifold $\M^k$, we invoke the following basics:
\begin{itemize}
\item the curvature tensor (with its derivatives) is bounded;
\item the Ricci curvature tensor is bounded from below;
\item the injectivity radius is positive. 
\end{itemize}

These facts enable to represent the fractional derivative 
$|\nabla_y|^\sigma=\round{-\Del_{y}}^{\frac \sigma2}$ when $0<\sigma<1$ as 
\begin{align}\label{eq.fract}
|\nabla_y|^\sigma f(x)=\round{\int_0^\infty\round{\frac 1{t^\sigma \mathrm{v}_{g}(B(x,t))}\int_{B(x,t)}\left | f(x)-f(y) \right |d\mathrm{v}_g(y)}^2\, \frac{dt}t}^{\frac 12},
\end{align}
where with $B(x,t)$ we indicate the open ball of center $x\in \M^k$ and radius $t>0$ (for additional details we remand to \cite{BBR} or \cite{CRT}).
Therefore we can recall the next
\begin{lem}\label{HC}
Assume $\M^k$ a compact manifold with dimension $k\geq1$ and let $\phi$ be an H\"older continuous function of order $0<\mu<1$. Thus, for any $0<s<\mu$, $1<q<\infty$ and $\frac s\mu<\sigma<1$ we get
\begin{equation}
\norm{|\nabla_y|^s \phi(f)}{L^q}\leq C \norm{|f|^{\mu-\frac s\sigma}}{L^{q_1}}\norm{|\nabla_y|^\sigma f}{L^{{q_2}\frac s\sigma}}^{\frac s\sigma},
\end{equation}\label{eq.leib}
with $C>0$, $\frac 1q=\frac 1{q_1}+\frac 1{q_2}$ and $\round{1-\frac s{\mu\sigma}}q_1>1$.
\end{lem}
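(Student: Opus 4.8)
The plan is to reduce the inequality to a single pointwise estimate obtained from the difference--square--function representation \eqref{eq.fract}, and then to globalize it with the Hardy--Littlewood maximal inequality. Throughout I write $\mathcal A_t f(x)=\frac{1}{\mathrm{v}_{g}(B(x,t))}\int_{B(x,t)}|f(x)-f(y)|\,d\mathrm{v}_g(y)$, so that \eqref{eq.fract}, applied with order $s$ to $\phi(f)$ and with order $\sigma$ to $f$, reads
\[
|\nabla_y|^s\phi(f)(x)=\Big(\int_0^\infty\big(t^{-s}\mathcal A_t(\phi(f))(x)\big)^2\,\tfrac{dt}{t}\Big)^{1/2},\qquad |\nabla_y|^\sigma f(x)=\Big(\int_0^\infty\big(t^{-\sigma}\mathcal A_t f(x)\big)^2\,\tfrac{dt}{t}\Big)^{1/2}.
\]
The three geometric hypotheses listed before the lemma (bounded curvature, Ricci bounded below, positive injectivity radius) guarantee both the validity of \eqref{eq.fract} and that $\M^k$ is a doubling space on which the maximal operator $M$ is bounded on $L^p$ for every $p>1$.

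\emph{Step One (two bounds on the oscillation of $\phi(f)$).} Using the H\"older continuity $|\phi(a)-\phi(b)|\le C|a-b|^\mu$ I extract two estimates for $\mathcal A_t(\phi(f))(x)$. By the concavity of $r\mapsto r^\mu$ (here $0<\mu<1$) and Jensen's inequality one gets the scale--sensitive bound $\mathcal A_t(\phi(f))(x)\le C(\mathcal A_t f(x))^\mu$; by the subadditivity $|a-b|^\mu\le |a|^\mu+|b|^\mu$ one gets the scale--uniform bound $\mathcal A_t(\phi(f))(x)\le C\big(|f(x)|^\mu+M(|f|^\mu)(x)\big)=:C\,\Phi(x)$.

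\emph{Step Two (a scale-by-scale splitting).} For fixed $x$ I cut the defining integral at a threshold $t_0=t_0(x)$. On $\{t>t_0\}$ I insert $\mathcal A_t(\phi(f))(x)\le C\Phi(x)$ and integrate $t^{-2s}$, which produces $C\,\Phi(x)^2 t_0^{-2s}$. On $\{t<t_0\}$ I insert $\mathcal A_t(\phi(f))(x)\le C(\mathcal A_t f(x))^\mu$; writing $\mathcal A_t f(x)=t^\sigma u(t)$ with $u(t)=t^{-\sigma}\mathcal A_t f(x)$, the integrand becomes $t^{2(\mu\sigma-s)}u(t)^{2\mu}$, and the exponent $\beta:=\mu\sigma-s$ is strictly positive because $\sigma>s/\mu$. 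Applying H\"older in the variable $t$ with conjugate exponents $1/\mu$ and $1/(1-\mu)$ bounds this piece by $C\big(\int_0^\infty u(t)^2\,\tfrac{dt}{t}\big)^{\mu}t_0^{2\beta}=C\,(|\nabla_y|^\sigma f(x))^{2\mu}\,t_0^{2\beta}$, the integral in $t$ converging at $0$ precisely because $\beta>0$. Collecting the two pieces and optimizing in $t_0$ (balancing $t_0^{2\beta}$ against $t_0^{-2s}$, noting $\beta+s=\mu\sigma$) yields the pointwise bound
\[
|\nabla_y|^s\phi(f)(x)\le C\,\Phi(x)^{\,1-\frac{s}{\mu\sigma}}\,\big(|\nabla_y|^\sigma f(x)\big)^{\frac s\sigma}.
\]

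\emph{Step Three (globalization).} Taking the $L^q$ norm and applying H\"older with $\frac1q=\frac1{q_1}+\frac1{q_2}$ separates the factors into $\norm{\Phi^{\,1-\frac{s}{\mu\sigma}}}{L^{q_1}}\,\norm{|\nabla_y|^\sigma f}{L^{q_2\frac s\sigma}}^{\frac s\sigma}$. For the first factor I invoke the maximal inequality on $L^{q_1(1-\frac{s}{\mu\sigma})}$: the hypothesis $\round{1-\frac{s}{\mu\sigma}}q_1>1$ is exactly what makes this exponent larger than $1$, whence $\norm{M(|f|^\mu)}{L^{q_1(1-\frac{s}{\mu\sigma})}}\le C\norm{|f|^\mu}{L^{q_1(1-\frac{s}{\mu\sigma})}}$; a direct check of the exponents (using $\mu\round{1-\frac{s}{\mu\sigma}}=\mu-\frac s\sigma$) turns $\norm{\Phi^{\,1-\frac{s}{\mu\sigma}}}{L^{q_1}}$ into $C\norm{|f|^{\mu-\frac s\sigma}}{L^{q_1}}$, which is the asserted right-hand side.

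The main obstacle is Step Two: since $\mu<1$, one cannot directly compare the $s$--square function of $\phi(f)$ with the $\sigma$--square function of $f$, because the power $2\mu$ on $\mathcal A_t f$ never recovers the full square $u(t)^2$ pointwise in $t$. The device that resolves this is H\"older in the $t$--variable on the truncated range $(0,t_0)$, whose convergence at $t=0$ hinges on $\beta=\mu\sigma-s>0$, i.e. on $\sigma>s/\mu$; the companion restriction $\sigma<1$ is what legitimizes the representation \eqref{eq.fract} in the first place.
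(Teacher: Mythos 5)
Your proof is correct and follows essentially the same route as the paper, which reduces the lemma to the pointwise inequality $|\nabla_y|^s \phi(f)(x)\leq C\left(M(|f|^\mu)(x)\right)^{1-\frac{s}{\mu\sigma}}\left(|\nabla_y|^\sigma f(x)\right)^{\frac{s}{\sigma}}$ (cited from Proposition A.1 of \cite{Visa}) and then concludes by H\"older and the maximal inequality; you simply supply the derivation of that pointwise bound from the representation \eqref{eq.fract}, which the paper outsources to the reference. Incidentally, your version also corrects the typo in the paper's display \eqref{eq.max}, where the factor on the right-hand side should involve $|\nabla_y|^\sigma f$ rather than $|\nabla_y|^s f$.
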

\begin{proof}
The proof is the same as in Proposition A.$1$ in \cite{Visa} and works in our framework without any changes. It comes out from the pointwise
inequality 
\begin{equation}\label{eq.max}
|\nabla_y|^s \phi(f)(x)\leq C\round{M(|f|^\mu)(x)}^{1-\frac s{\mu\sigma}}\round{|\nabla_y|^sf(x)}^{\frac s\sigma},
\end{equation}
where 
$$
M(f)(x)=\sup_{t>0}\frac 1{\mathrm{v}_g(B(x,t))}\int_{B(x,t)}\left | f(y) \right |d\mathrm{v}_g(y),
$$
is the Hardy-Littlewood maximal operator defined on $\M^k$ (for additional details we refer \cite{BS} and
\cite{CoWe}).
\end{proof}

At this point we can shape the main result of this section (we refer to \cite{CRT}, see also \cite{GK} and \cite{RS}
for an analogue property on the flat manifold $\R^n$). 

\begin{prop}\label{LF}
For any $f\in H_y^{\sigma}\cap L_y^\infty$ let $G(f)=f|f|^\mu$ be a real function with $\mu>0$. Then 
one has
\begin{equation}\label{eq.LF}
\norm{f|f|^\mu}{H_y^{\sigma}}\leq C \norm{f}{H_y^{\sigma}}\norm{f}{L_y^{\infty}}^{\mu},
\end{equation}
with $C>0$, provided that $0<\sigma<1+\mu$.
\end{prop}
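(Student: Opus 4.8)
The plan is to pass to the homogeneous fractional derivative, dispose of the easy $L_y^2$ part, and then treat separately the range $0<\sigma<1$, where a fractional chain rule applies at once, and the range $1\le\sigma<1+\mu$, where one first peels off integer derivatives and then redistributes the remaining fractional derivative by a Leibniz rule. First I would use the norm equivalence $\norm{u}{H_y^{\sigma}}\sim\norm{u}{L_y^2}+\norm{|\nabla_y|^{\sigma}u}{L_y^2}$, valid by the spectral calculus for $1-\Delta_y$ on the compact $\M^k$ (the multipliers $(1+\nu_k)^{\sigma/2}$ and $1+\nu_k^{\sigma/2}$ being comparable uniformly in $k$). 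The low-frequency contribution is harmless, since $\norm{f|f|^\mu}{L_y^2}=\norm{|f|^{1+\mu}}{L_y^2}\le\norm{f}{L_y^\infty}^\mu\norm{f}{L_y^2}\le\norm{f}{L_y^\infty}^\mu\norm{f}{H_y^{\sigma}}$, so \eqref{eq.LF} reduces to the homogeneous bound $\norm{|\nabla_y|^{\sigma}(f|f|^\mu)}{L_y^2}\le C\norm{f}{H_y^{\sigma}}\norm{f}{L_y^\infty}^\mu$.

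For $0<\sigma<1$ I would invoke the fractional chain rule on $\M^k$ furnished by \cite{CRT} (its pointwise form being \eqref{eq.max}, and Lemma \ref{HC} its H\"older-refined companion): since $G(z)=z|z|^\mu$ is $C^1$ with $G'(z)=(\mu+1)|z|^\mu$, one gets $\norm{|\nabla_y|^{\sigma}G(f)}{L_y^2}\le C\norm{G'(f)}{L_y^\infty}\norm{|\nabla_y|^{\sigma}f}{L_y^2}\le C\norm{f}{L_y^\infty}^\mu\norm{f}{H_y^{\sigma}}$, which already carries the correct homogeneity $1+\mu$ and is the claim in this range. No relation between $\sigma$ and $k$ is needed here, since the estimate is internal to the compact factor.

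For $1\le\sigma<1+\mu$ I would write $\sigma=1+\tau$ with $0\le\tau<\mu$ and use $\nabla_y(f|f|^\mu)=(\mu+1)|f|^\mu\,\nabla_y f$, reducing matters to $\norm{|\nabla_y|^{\tau}(|f|^\mu\nabla_y f)}{L_y^2}$. A fractional Leibniz rule (again \cite{CRT}, \cite{GK}, \cite{RS}, transported to $\M^k$ via \eqref{eq.fract}) bounds this by $\norm{|f|^\mu}{L_y^\infty}\norm{|\nabla_y|^{1+\tau}f}{L_y^2}$, which equals $\norm{f}{L_y^\infty}^\mu\norm{|\nabla_y|^{\sigma}f}{L_y^2}\le\norm{f}{L_y^\infty}^\mu\norm{f}{H_y^{\sigma}}$, plus a cross term $\norm{|\nabla_y|^{\tau}(|f|^\mu)}{L_y^a}\norm{\nabla_y f}{L_y^b}$ with $\tfrac1a+\tfrac1b=\tfrac12$. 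To the first factor of the cross term I would apply Lemma \ref{HC} with $\phi(z)=|z|^\mu$, which is genuinely H\"older of order $\mu$ and satisfies $\tau<\mu$; combining with the Gagliardo--Nirenberg interpolation $\norm{\nabla_y f}{L_y^b}\lesssim\norm{f}{L_y^\infty}^{1-\theta}\norm{f}{H_y^{\sigma}}^{\theta}$ and the analogous interpolation of the fractional factor produced by Lemma \ref{HC}, the degrees force the total weight of $\norm{f}{L_y^\infty}$ to be $\mu$ and that of $\norm{f}{H_y^{\sigma}}$ to be $1$. For $\mu\ge1$, when $\tau$ may exceed $1$, the same scheme applies after peeling off $\lfloor\sigma\rfloor$ derivatives and expanding $\nabla_y^{\lfloor\sigma\rfloor}(f|f|^\mu)$ by Fa\`a di Bruno.

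The hard part is this last regime. Lemma \ref{HC} carries the homogeneity of $|z|^\mu$ and is confined to H\"older orders and inner exponents below $1$, so a single chain-rule application cannot reproduce $f|f|^\mu$ directly; one is forced to differentiate, invoke a genuine fractional Leibniz rule on $\M^k$ (its transport through the ball-average representation \eqref{eq.fract} being the one truly manifold-dependent step, leaning on bounded curvature, Ricci bounded below, positive injectivity radius, and the $L^q$-boundedness of the maximal operator $M$), and then balance the Gagliardo--Nirenberg exponents so that every term collapses to precisely $\norm{f}{L_y^\infty}^\mu\norm{f}{H_y^{\sigma}}$. The homogeneity count makes this balancing consistent, but verifying that all interpolation exponents lie in $[0,1]$ and that the relevant embeddings hold on the compact $k$-manifold, together with the endpoints $\tau=0$ and $\mu\ge1$, is where the care is concentrated.
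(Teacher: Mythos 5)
Your proposal follows essentially the same route as the paper's proof: for $0<\sigma<1$ the chain-rule bound you invoke is exactly what the elementary pointwise Lipschitz estimate yields through the ball-average representation \eqref{eq.fract}, and for $\sigma>1$ the paper likewise writes $\sigma=1+s$, peels off one gradient, applies the Leibniz-type estimate of \cite{CRT}, and closes the cross term via Lemma \ref{HC} together with the interpolation bounds. The approaches coincide in all essentials, including the brief treatment of the case $\mu\geq 1$.
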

\begin{proof}
\emph{Case $0<\sigma<1$.} Because in this regime one has 
\begin{equation}\label{def.fracSob}
 \norm{f}{H_y^{\sigma}}\sim \norm{f}{L_y^{2}} +\norm{|\nabla_y|^\sigma f}{L_y^{2}},
\end{equation}
with $|\nabla_y|^\sigma $ as in \eqref{eq.fract}, the result is given 
by an application of the elementary inequality
\begin{align*}
\left | f(x)|f(x)|^\mu-f(y)|f(y)|^\mu \right |
\\ \leq C\norm{|f|^\mu}{L_y^{\infty}}\left | f(x)-f(y) \right |
\leq C\norm{f}{L_y^{\infty}}^{\mu}\left | f(x)-f(y) \right |,
\end{align*}
(for the the proof of \eqref{eq.LF} in the specific case of $\M^k=\T^1$ we remand to Lemma $4.1$ in \cite{TV2}).
\\
\emph{Case $\sigma=1$.} This is given by the fact that the $L^\infty\cap H_y^{\sigma}$ is an algebra.
\\
\emph{Case $\sigma>1$.} We will only give the details for $\mu<1$. The argument works also
in the case $\mu>1$ observing that if $G(f)=f|f|^\mu$ then 
$G(0)=\dots=G^{(d)}(0)=0$ for $d=[\mu]$, being $[\mu]$ the integer part of $\mu$.\\
Because of $\sigma>1$ we can write $\sigma=1+s$ with $0<s<1$, then by the definition of the
$H_y^{\sigma}$-norm we arrive at
\begin{align}\label{eq.LF1}
\norm{f|f|^\mu}{H_y^{\sigma}}\\
\leq C\norm{f|f|^\mu}{H_y^{s}}+C \norm{\nabla_y(f|f|^\mu)}{H_y^{s}}\leq C\norm{f|f|^\mu}{H_y^{s}}+C \norm{|f|^\mu\nabla_yf}{H_y^{s}}.
\nonumber
\end{align}
Then we can see that it is enough to carry on with the last term in the previous estimate
\eqref{eq.LF1}, that is
\begin{align}\label{eq.LF2}
\norm{|f|^\mu\nabla_yf}{H_y^{s}}\\
\leq C\norm{\nabla_yf}{H_y^{s}}\norm{f}{L_y^{\infty}}^\mu +C \norm{\nabla_yf}{L_y^{2\sigma}}\norm{|f|^\mu}{W_y^{s, 2\frac \sigma s}},
\nonumber
\end{align}
where we used the first estimate of Theorem 27 in \cite{CRT}.
Since the interpolation bound 
\begin{equation}\label{eq.LF2a}
\norm{\nabla_yf}{L_y^{2\sigma}}\leq C \norm{f}{H_y^{\sigma}}^{\frac 1\sigma}\norm{f}{L_y^{\infty}}^{1-\frac 1\sigma},
\end{equation}
(see again Theorem 27 in \cite{CRT}) we need only to prove the following one
\begin{equation}\label{eq.LF3}
\norm{|\nabla_y|^s|f|^\mu}{L_y^{2\frac \sigma s}}\leq C \norm{|\nabla_y|^{\overline\sigma} f}{L_y^{2\frac \sigma  {\overline \sigma}}}^{\frac s{\overline \sigma}}\norm{f}{L_y^{\infty}}^{\mu-\frac s{\overline \sigma}} ,
\end{equation}
for some $\frac s \mu<\overline\sigma <1$.
Assume the above \eqref{eq.LF3} true, then by the interpolation estimate (we refer to the Propositions 31, 32 
 in \cite{CRT})
\begin{equation}\label{eq.LF3a}
\norm{|\nabla_y|^{\overline \sigma}f}{L_y^{2\frac \sigma{\overline \sigma} }}\leq C \norm{|\nabla_y|^{\sigma} f}{L_y^{2}}^{\frac {\overline \sigma}\sigma}\norm{f}{L_y^{\infty}}^{1-\frac {\overline \sigma}\sigma},
\end{equation}
one achieve 
\begin{align}\label{eq.LF4}
\norm{|\nabla_y|^s|f|^\mu}{L_y^{2\frac \sigma s}}\leq C \round{\norm{|\nabla_y|^{\sigma} f}{L_y^{2}}^{\frac {\overline \sigma}\sigma}\norm{f}{L_y^{\infty}}^{1-\frac {\overline \sigma}\sigma}}^{\frac s{\overline \sigma}}\norm{f}{L_y^{\infty}}^{\mu-\frac s{\overline \sigma}} \\
\leq C \norm{|\nabla_y|^{\sigma} f}{L_y^{2}}^{\frac s{ \sigma}}\norm{f}{L_y^{\infty}}^{\frac s{\overline \sigma}-\frac s\sigma}\norm{f}{L_y^{\infty}}^{\mu-\frac s{\overline \sigma}}\leq C \norm{ f}{H_y^{\sigma}}^{\frac s{ \sigma}}\norm{f}{L_y^{\infty}}^{\frac s{\overline \sigma}-\frac s\sigma}\norm{f}{L_y^{\infty}}^{\mu-\frac s{\overline \sigma}}.
\nonumber
\end{align}
Therefore  \eqref{eq.LF2a} in connection with \eqref{eq.LF4}, recalling again the definition of the
$H_y^{\sigma}$-norm, brings to
\begin{align}\label{eq.LF5}
 \norm{\nabla_yf}{L_y^{2\sigma}}\norm{|f|^\mu}{W_y^{s, 2\frac \sigma s}}\leq C\norm{f}{H_y^{\sigma}}\norm{f}{L_y^{\infty}}^{\mu}.
 \end{align}
 Finally combining \eqref{eq.LF1}, \eqref{eq.LF2} and \eqref{eq.LF5} one arrive at \eqref{eq.LF}.
 It remains to consider the estimate \eqref{eq.LF3}. Since the function $|f|^\mu$ is H\"older continuous of order $0<\mu<1$, one is in position to apply Lemma \ref{HC} with $q=q_2=2\frac \sigma s$, $q_1=\infty$ and $\sigma=\overline \sigma$ getting 
 \begin{equation*}%\label{eq.LF3}
\norm{|\nabla_y|^s|f|^\mu}{L_y^{2\frac \sigma s}}\leq C \norm{|\nabla_y|^{\overline\sigma} f}{L_y^{2 \frac \sigma s \frac s{\overline \sigma}}}^{\frac s{\overline \sigma}}\norm{|f|^{\mu-\frac s{\overline \sigma}} }{L_y^{\infty}},
\end{equation*}
that is the desired \eqref{eq.LF3}.
\end{proof}
\begin{cor}
The same conclusions of Proposition \ref{LF} remain valid if one replace the function $f|f|^\mu$
by $|f|^{1+\mu}.$
\end{cor}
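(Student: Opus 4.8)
The plan is to re-run, essentially verbatim, the three-case argument used to establish Proposition \ref{LF}, exploiting the fact that every estimate there relied only on two structural features of the map $f\mapsto f|f|^\mu$: the pointwise Lipschitz-type control $\abs{G(a)-G(b)}\leq C\max(\abs a,\abs b)^\mu\abs{a-b}$ and the gradient bound $\abs{\nabla_y G(f)}\leq C\abs f^\mu\abs{\nabla_y f}$, together with the H\"older continuity of order $\mu$ of the factor $\abs f^\mu$. Since the map $t\mapsto \abs t^{1+\mu}$ is $C^1$ for $\mu>0$ with derivative bounded by $(1+\mu)\abs t^\mu$, all three properties hold equally for $G(f)=\abs f^{1+\mu}$, so no new ingredient is needed.

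First I would treat the regime $0<\sigma<1$. Using $\bigl|\,\abs a-\abs b\,\bigr|\leq\abs{a-b}$ together with the mean value theorem one gets the elementary inequality
\begin{equation*}
\bigl|\,\abs{f(x)}^{1+\mu}-\abs{f(y)}^{1+\mu}\,\bigr|\leq C\norm{f}{L_y^{\infty}}^{\mu}\abs{f(x)-f(y)},
\end{equation*}
which is exactly the bound used in Proposition \ref{LF}; inserting it into the representation \eqref{eq.fract} and the characterization \eqref{def.fracSob} yields \eqref{eq.LF} with $f|f|^\mu$ replaced by $\abs f^{1+\mu}$. The case $\sigma=1$ follows identically, either invoking that $L_y^\infty\cap H_y^1$ is an algebra or, more directly, from the pointwise gradient bound $\abs{\nabla_y\abs f^{1+\mu}}\leq(1+\mu)\abs f^\mu\abs{\nabla_y f}$, which gives $\norm{\nabla_y\abs f^{1+\mu}}{L_y^2}\leq C\norm{f}{L_y^\infty}^\mu\norm{\nabla_y f}{L_y^2}$.

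For the case $\sigma>1$ I would again write $\sigma=1+s$ with $0<s<1$ and split, as in \eqref{eq.LF1},
\begin{equation*}
\norm{\abs f^{1+\mu}}{H_y^{\sigma}}\leq C\norm{\abs f^{1+\mu}}{H_y^{s}}+C\norm{\nabla_y\abs f^{1+\mu}}{H_y^{s}}.
\end{equation*}
The first term is controlled by the already established case $0<\sigma<1$ applied with the exponent $s<\sigma$, giving $\norm{\abs f^{1+\mu}}{H_y^{s}}\leq C\norm{f}{H_y^{\sigma}}\norm{f}{L_y^{\infty}}^{\mu}$. For the second term the pointwise identity $\abs{\nabla_y\abs f^{1+\mu}}\leq C\abs f^\mu\abs{\nabla_y f}$ reduces matters to bounding $\norm{\abs f^\mu\nabla_y f}{H_y^{s}}$, which is precisely the quantity treated in \eqref{eq.LF2}--\eqref{eq.LF5}: one applies Lemma \ref{HC} to the H\"older-continuous function $\abs f^\mu$ together with the interpolation estimates \eqref{eq.LF2a} and \eqref{eq.LF3a} borrowed from \cite{CRT}. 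This chain goes through without change, and for $\mu>1$ one notes as before that $G(0)=\cdots=G^{([\mu])}(0)=0$.

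The point worth emphasizing is that there is no genuinely new difficulty: the only thing to verify is that $\abs f^{1+\mu}$ shares the magnitude and H\"older structure of $f|f|^\mu$, so that Lemma \ref{HC} and the interpolation bounds of \cite{CRT} apply verbatim. The corollary is thus an immediate transcription of the proof of Proposition \ref{LF}.
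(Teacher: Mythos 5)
Your proposal is correct and follows exactly the route the paper intends: the paper states this corollary without proof precisely because every step of Proposition \ref{LF} (the elementary difference bound for $0<\sigma<1$, the algebra property at $\sigma=1$, and the reduction to $\norm{|f|^\mu\nabla_y f}{H_y^{s}}$ via Lemma \ref{HC} and the interpolation estimates for $\sigma>1$) depends only on the pointwise magnitude and H\"older structure of the nonlinearity, which $|f|^{1+\mu}$ shares with $f|f|^\mu$. Your verification of those structural properties, including the vanishing of the first $[\mu]$ derivatives at the origin when $\mu>1$, is exactly the check required.
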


\end{document}